\def\XIx\langle#1\rangle{h(#1)}
\newtheorem{theorem}{Theorem}[section]
\newtheorem{lemma}[theorem]{Lemma}
\newtheorem{remark}[theorem]{Remark}
\def\qedbox{\hbox{$\rlap{$\sqcap$}\sqcup$}}
\def\id{\operatorname{id}}\def\Tr{\operatorname{Tr}}
\begin{document}
\title
{K\"ahler--Weyl manifolds of dimension 4}
\author{P. Gilkey and S. Nik\v cevi\'c}
\address{PG: Math. Dept. Univ. Oregon, Eugene OR 97403 USA}
\email{gilkey@uoregon.edu}
\address{SN: Mathematical Institute, Sanu,
Knez Mihailova 36, p.p. 367,
11001 Belgrade,
Serbia}
\email{stanan@mi.sanu.ac.rs}
\begin{abstract}{We determine the space of algebraic
pseudo-Hermitian K\"ahler--Weyl curvature tensors and the space of para-Hermitian K\"ahler--Weyl curvature tensors in
dimension 4 and show that every algebraic possibility is geometrically
realizable. We establish the Gray identity for pseudo-Hermitian Weyl manifolds
and for para-Hermitian Weyl manifolds in arbitrary dimension. 
\\MSC 2002: 53B05, 15A72,
53A15, 53B10, 53C07, 53C25}\end{abstract}
\maketitle

\section{Introduction}
Let $(M,g)$ be a pseudo-Riemannian manifold of dimension $m=2\bar m\ge4$ with
$H^1(M;\mathbb{R})=0$.  Let $\nabla$ be a torsion-free connection on the
tangent bundle $TM$ of $M$. The triple $(M,g,\nabla)$ is said to be a {\it
Weyl structure} if $\nabla g=-2\phi\otimes g$ for some smooth $1$-form $\phi$
on $M$. Let $\nabla^g$ be the Levi-Civita connection of $g$ and let
$\phi^\star$ be the associated dual vector field. One has \cite{GNU10}:
\begin{equation}\label{eqn-1.a}
\nabla_xy:=\nabla_x^gy+\phi(x)y+\phi(y)x-g(x,y)\phi^\star\,.
\end{equation}

These geometries were first introduced by Weyl
\cite{W22} and remain an active area of investigation today -- see, for
example, the discussion in
\cite{G09,N09,P10,V10}. Weyl structures are intimately linked with conformal
geometry. If $\tilde g=e^{2f}g$ is a conformally equivalent metric, then
$(M,\tilde g,\nabla)$ is again a Weyl structure where $\tilde\phi=\phi-df$.
A Weyl structure is said to be {\it trivial} if $\phi=df$ for some smooth
function $f$ or, equivalently, if $\nabla=\nabla^{\tilde g}$ where
$\nabla^{\tilde g}$ is the Levi-Civita connection of the conformally
equivalent metric $\tilde g=e^{2f}$. Since we have assumed that
$H^1(M;\mathbb{R})=0$, the Weyl structure is trivial if and only if
$d\phi=0$.

Let $J_-$ (resp. $J_+$) be an almost complex (resp. para-complex) structure
on $TM$. It is convenient to use a common notation $J_\pm$ even though we
shall never be considering both structures simultaneously. One says that
$J_\pm$ is {\it integrable} if there exists a cover of $M$ by coordinate
charts $(x^1,...,x^{\bar m},y^1,...,y^{\bar m})$ so that
$$J_\pm:\partial_{x_i}\rightarrow\partial_{y_i}\quad\text{and}
\quad J_\pm:\partial_{y_i}\rightarrow\pm\partial_{x_i}\,.$$
We say that a torsion free connection $\nabla$ is {\it K\"ahler} if $\nabla
J_\pm=0$; the existence of such a connection then implies $J_\pm$ is
integrable. The triple $(M,g,J_\pm)$ is said to be a {\it
para/pseudo-Hermitian manifold} if $J_\pm^*g=\mp g$ and if $J_\pm$ is
integrable. If the Levi-Civita connection $\nabla^g$ is {\it K\"ahler}, then
$(M,g,J_\pm)$ is said to be {\it K\"ahler}.

We wish to study the interaction of these two structures. One says that a
quadruple $(M,g,J_\pm,\nabla)$ is a {\it K\"ahler--Weyl} structure if
$(M,g,J_\pm)$ is a para/pseudo-Hermitian manifold, if $(M,g,\nabla)$ is a
Weyl structure, and if $\nabla J_\pm=0$. The following is well known -- see,
for example, the discussion in \cite{PPS93} in the Riemannian setting (which
uses results of \cite{V82,V83}) and the generalization given in \cite{GS11}
to the more general context:

\begin{theorem}\label{thm-1.1} Let $m\ge6$. If $(M,g,J_\pm,\nabla)$ is a 
K\"ahler--Weyl structure, then the associated Weyl structure is trivial, i.e.
there is a conformally equivalent metric $\tilde g=e^{2f}g$ so that
$(M,\tilde g,J_\pm)$ is K\"ahler and so that $\nabla=\nabla^{\tilde g}$.
\end{theorem}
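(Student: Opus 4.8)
The plan is to reduce the statement to a property of the fundamental $2$-form and then invoke a Lefschetz injectivity argument, which is exactly where the hypothesis $m\ge6$ enters. First I would convert $\nabla J_\pm=0$ into an explicit formula for the Levi-Civita derivative of $J_\pm$. Substituting \eqref{eqn-1.a} into $0=(\nabla_x J_\pm)y=\nabla_x(J_\pm y)-J_\pm(\nabla_x y)$ and using $\nabla^g g=0$ gives, after a short computation,
\[
(\nabla^g_x J_\pm)y=-\phi(J_\pm y)\,x+\phi(y)\,J_\pm x+g(x,J_\pm y)\,\phi^\star-g(x,y)\,J_\pm\phi^\star.
\]
The hypotheses $J_\pm^2=\pm\id$ and $J_\pm^*g=\mp g$ guarantee that the right-hand side is skew-adjoint in the appropriate sense, so that it is consistent with $J_\pm$ being a $g$-compatible (para-)complex structure; in particular the integrability of $J_\pm$ is automatically compatible with this form and supplies no further constraint. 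This already signals that the obstruction to triviality must be extracted at the level of $d^2$ rather than first order.

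Next I would introduce the fundamental $2$-form $\Omega(x,y):=g(J_\pm x,y)$, which is nondegenerate since $g$ is nondegenerate and $J_\pm$ is invertible. Feeding the displayed formula into $(\nabla^g_z\Omega)(x,y)=g((\nabla^g_z J_\pm)x,y)$ and antisymmetrizing over $(z,x,y)$ (legitimate as $\nabla^g$ is torsion free), the terms involving $g$ cancel and the remaining terms combine to the clean identity
\[
d\Omega=-2\,\phi\wedge\Omega.
\]
Thus $\phi$ is, up to a universal nonzero constant, the Lee form of the (para-)Hermitian structure. I expect this computation to be identical in the pseudo-Hermitian and para-Hermitian cases, the differing intermediate $\mp$ signs cancelling so that both are handled at once.

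The decisive step is to differentiate this identity. Applying $d$ and using $d^2=0$, together with $\phi\wedge d\Omega=-2\,\phi\wedge\phi\wedge\Omega=0$, yields
\[
0=d(d\Omega)=-2\,d\phi\wedge\Omega+2\,\phi\wedge d\Omega=-2\,d\phi\wedge\Omega,
\]
so that $d\phi\wedge\Omega=0$, i.e. the $2$-form $d\phi$ lies in the kernel of the Lefschetz operator $L=\Omega\wedge(\cdot)\colon\Lambda^2\to\Lambda^4$. Here I would invoke the symplectic linear algebra underlying the hard Lefschetz theorem for the nondegenerate form $\Omega$ on an $m=2\bar m$ dimensional space: the map $L\colon\Lambda^p\to\Lambda^{p+2}$ is injective for $p\le\bar m-1$. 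For $p=2$ this injectivity holds precisely when $\bar m\ge3$, i.e. when $m\ge6$. Hence $d\phi=0$; since $H^1(M;\mathbb{R})=0$ we may write $\phi=df$, and the conformally rescaled metric $\tilde g=e^{2f}g$ has $\tilde\phi=\phi-df=0$, so that $\nabla=\nabla^{\tilde g}$ and $(M,\tilde g,J_\pm)$ is K\"ahler, as asserted.

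I expect the main obstacle to be organizational rather than conceptual: carrying the derivation of $d\Omega=-2\phi\wedge\Omega$ through cleanly, keeping track of the antisymmetrizations and of the $\mp$ signs so that the pseudo- and para-cases are treated simultaneously, and then citing the Lefschetz injectivity in exactly the needed range. It is worth noting that $L\colon\Lambda^2\to\Lambda^4$ fails to be injective when $\bar m=2$, and this failure is precisely what makes dimension $4$ exceptional and motivates the remainder of the paper.
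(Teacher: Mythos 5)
Your proof is correct, but note that the paper does not actually prove Theorem~\ref{thm-1.1}: it quotes the result as well known, citing \cite{PPS93,V82,V83} and the generalization in \cite{GS11}. What you have written is essentially the classical Lee-form argument of Vaisman that underlies those references, and each step checks out: substituting Equation~(\ref{eqn-1.a}) into $\nabla J_\pm=0$ gives your formula for $(\nabla^g_xJ_\pm)y$; the cyclic sum $d\Omega(z,x,y)=(\nabla^g_z\Omega)(x,y)+(\nabla^g_x\Omega)(y,z)+(\nabla^g_y\Omega)(z,x)$ (valid since $\nabla^g$ is torsion free) kills the $g$-terms and yields $d\Omega=-2\phi\wedge\Omega$, and the only identity needed, $g(J_\pm x,y)=-g(x,J_\pm y)$, holds in both the para- and pseudo-Hermitian cases, so the two cases are indeed uniform; $d^2=0$ then gives $d\phi\wedge\Omega=0$; and injectivity of $L=\Omega\wedge\cdot$ on $2$-forms is pure symplectic linear algebra, valid pointwise for any nondegenerate $2$-form irrespective of signature, and holds exactly when $\bar m\ge3$, i.e. $m\ge6$. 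Finally $H^1(M;\mathbb{R})=0$ converts $d\phi=0$ into $\phi=df$ and hence triviality. By contrast, the route suggested by the paper's own machinery is representation-theoretic: for $m\ge6$, Theorem~\ref{thm-1.3}(1) gives $\mathfrak{K}_{\pm,\mathfrak{W}}=\mathfrak{K}_{\pm,\mathfrak{R}}$, so the curvature of a K\"ahler--Weyl structure satisfies Equation~(\ref{eqn-1.d}); by the last identity of Equation~(\ref{eqn-1.b}) this forces $\rho_a=0$, hence $d\phi=-\frac1m\rho_a=0$ by Equation~(\ref{eqn-1.c}), and triviality follows from the result cited from \cite{GNU10} together with $H^1(M;\mathbb{R})=0$. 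Your argument is more elementary and self-contained, and it isolates exactly where the dimension enters: $L$ fails to be injective on $\Lambda^2$ only when $\bar m=2$, where its $5$-dimensional kernel corresponds to the extra module $L_{0,J_\pm}\approx\Lambda_{0,J_\pm}^2$ of Theorem~\ref{thm-1.3}(2). What the paper's heavier $\mathcal{U}_\pm^\star$-module apparatus buys is uniformity: the same decompositions simultaneously produce Theorems~\ref{thm-1.3}, \ref{thm-1.4} and \ref{thm-1.5}, whereas your argument, as it stands, only addresses the high-dimensional triviality statement.
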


Examples in \cite{CP00,PS91} show that Theorem~\ref{thm-1.1}
fails if $m=4$ and motivate our present investigation. Let
$\Omega_\pm$ be the K\"ahler form:
$$\Omega_\pm(x,y):=g(x,J_\pm y)\,.$$
Let $d$ be the exterior derivative and let $\delta$ be the dual operator, the
interior coderivative. The {\it Lee form} is given, modulo a suitable
normalizing constant, by $J_\pm^*\delta\Omega_\pm$ and plays a crucial role.
The following result was established \cite{KK10} in the Riemannian
setting; the proof extends without change to this more general context:

\begin{theorem}\label{thm-1.2}
Every para/pseudo-Hermitian manifold of dimension $4$ admits a unique 
K\"ahler-Weyl structure where $\phi=\pm\frac12J_\pm^*\delta\Omega_\pm$.
\end{theorem}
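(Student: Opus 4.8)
The plan is to use the fact that, by \eqref{eqn-1.a}, the Weyl connection $\nabla$ is determined by the $1$-form $\phi$ alone; constructing a K\"ahler--Weyl structure is therefore the same as solving the single tensorial equation $\nabla J_\pm=0$ for $\phi$. First I would substitute \eqref{eqn-1.a} into $(\nabla_xJ_\pm)y=\nabla_x(J_\pm y)-J_\pm(\nabla_xy)$ and collect terms. Setting $\psi:=J_\pm^*\phi$, so that $\psi(y)=\phi(J_\pm y)$, and recalling $\Omega_\pm(x,y)=g(x,J_\pm y)$, this produces the pointwise identity
\begin{equation*}
(\nabla^g_xJ_\pm)y=-\psi(y)\,x+\Omega_\pm(x,y)\,\phi^\star+\phi(y)\,J_\pm x-g(x,y)\,J_\pm\phi^\star,
\end{equation*}
which is equivalent to $\nabla J_\pm=0$. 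The theorem thus reduces to showing that this equation has a unique solution $\phi$, equal to the one asserted.

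For uniqueness I would contract the displayed identity over a local (pseudo-)orthonormal frame $\{e_i\}$ with $g(e_i,e_i)=\epsilon_i$. Putting $x=e_i$ and pairing with $e_i$, the left-hand side becomes $\sum_i\epsilon_i g((\nabla^g_{e_i}J_\pm)y,e_i)=-(\delta\Omega_\pm)(y)$. On the right-hand side the $\phi(y)$--term vanishes since $\Tr(J_\pm)=0$, while the other three terms collapse, using $g(J_\pm u,v)=-g(u,J_\pm v)$, to $-2\psi(y)$. Hence $\psi=\tfrac12\delta\Omega_\pm$, and applying $J_\pm^*$ once more with $(J_\pm^*)^2=\pm\id$ gives $\phi=\pm\tfrac12J_\pm^*\delta\Omega_\pm$. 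This shows that a K\"ahler--Weyl structure, if it exists, is unique and has exactly the stated Lee form.

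The real content is existence: one must check that, for $\phi$ defined by this formula, the displayed identity actually holds. I would first record the symmetries of $T(x,y,z):=g((\nabla^g_xJ_\pm)y,z)$ that follow from $(M,g,J_\pm)$ being para/pseudo-Hermitian -- namely $T$ is skew in $(y,z)$ (from $\nabla^gg=0$ and $J_\pm^*g=\mp g$), it satisfies $T(x,J_\pm y,z)=T(x,y,J_\pm z)$ (from $J_\pm^2=\pm\id$), and it obeys the additional relation forced by the integrability of $J_\pm$ (the vanishing of the Nijenhuis tensor). A direct check shows the right-hand side of the displayed identity enjoys the same symmetries, so the difference $E$ of the two sides is a tensor of this symmetry type whose trace vanishes by the uniqueness computation; it remains to prove $E=0$.

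This final vanishing is where dimension $4$ is essential, and I expect it to be the main obstacle. I would decompose the space of tensors of the above symmetry type under the structure group $U(\bar m)$, resp.\ its para-complex analogue, and observe that for $\bar m=2$ the trace map identifies this space with the space of $1$-forms; equivalently, the summand that would survive in the integrable case beyond the Lee form (the analogue of the Gray--Hervella class $\mathcal W_3$) is absent when $\bar m=2$. A trace-free tensor of this type must then vanish, so $E=0$ and existence follows. This is precisely the phenomenon that breaks down for $m\ge6$, consistent with Theorem~\ref{thm-1.1}; concretely I would verify the vanishing either by this representation-theoretic count or by matching the few independent components of $T$ against $\delta\Omega_\pm$ in an adapted frame.
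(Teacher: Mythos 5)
Your proposal is correct and follows essentially the same route as the paper: your displayed identity is Equation~(\ref{eqn-3.a}), your trace computation reproduces the identity $\tau_1\sigma_\pm=(m-2)J_\pm^\star$ of Equation~(\ref{eqn-2.d}) used for uniqueness, and your final step (a trace-free tensor with the symmetries of $\nabla^g\Omega_\pm$ vanishes when $\bar m=2$) is exactly the Gray--Hervella statement $U_\pm=U_{\pm,4}$ of Theorem~\ref{thm-2.5}, which the paper invokes by citation rather than proving. The only difference is cosmetic: you work with $\nabla^gJ_\pm$ where the paper works with $\nabla^g\Omega_\pm$ and the maps $\sigma_\pm$, $\tau_1$.
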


The results of Theorem~\ref{thm-1.1} and of Theorem~\ref{thm-1.2} are
closely related to curvature decompositions. Let $R$ be the curvature tensor,
let $\mathcal{R}$ be the curvature operator, and let $\rho$ be the Ricci
tensor of a Weyl structure $(M,g,\nabla)$. They are defined by:
\begin{eqnarray*}
&&\mathcal{R}(x,y):=\nabla_x\nabla_y-\nabla_y\nabla_x-\nabla_{[x,y]},\\
&&R(x,y,z,w):=g(\mathcal{R}(x,y)z,w),\\
&&\rho(x,y):=\Tr\{z\rightarrow\mathcal{R}(z,x)y\}\,.
\end{eqnarray*}
Let $\rho_a(x,y):=\frac12\{\rho(x,y)-\rho(y,x)\}$ be the alternating part
of the Ricci tensor. The following facts are well known (see, for
example,
\cite{GI94,GNU10,PS91,PT93}):
\begin{equation}\label{eqn-1.b}
\begin{array}{l}
R(x,y,z,w)=-R(y,x,z,w),\\
R(x,y,z,w)+R(y,z,x,w)+R(z,x,y,w)=0,\vphantom{\vrule height 12pt}\\
R(x,y,z,w)+R(x,y,w,z)=-\textstyle\frac4m\rho_a(x,y)g(z,w)\vphantom{\vrule
height 12pt}\,.\end{array}\end{equation}
We also have the relation:
\begin{equation}\label{eqn-1.c}
 d\phi=\textstyle-\frac1m\rho_a\,.
\end{equation}

If $\nabla=\nabla^g$ is the Levi-Civita connection, then we have the
additional symmetry:
\begin{equation}\label{eqn-1.d}
R(x,y,z,w)+R(x,y,w,z)=0\,.
\end{equation}
The Weyl structure is trivial if and only if Equation~(\ref{eqn-1.d}) is
satisfied \cite{GNU10}. If $\nabla$ is K\"ahler,
then $\mathcal{R}(x,y)J_\pm=J_\pm\mathcal{R}(x,y)$ for all $x,y$ or,
equivalently:
\begin{equation}\label{eqn-1.e}
R(x,y,J_\pm z,J_\pm w)=\mp R(x,y,z,w)\,.
\end{equation}

We now pass to the algebraic context. Let $(V,\langle\cdot,\cdot\rangle)$ be
an inner product space. The space of {\it Weyl curvature tensors}
$\mathfrak{W}\subset\otimes^4V^*$ is defined by imposing the symmetry of
Equation~(\ref{eqn-1.b}). The space of {\it Riemann curvature tensors}
$\mathfrak{R}\subset\mathfrak{W}$ is obtained by requiring in addition the
symmetry of Equation~(\ref{eqn-1.d}). Let $J_\pm$ be a para/pseudo-Hermitian
structure on $(V,\langle\cdot,\cdot\rangle)$. We define the space of {\it
K\"ahler tensors} $\mathfrak{K}_\pm$ by imposing Equation~(\ref{eqn-1.e}).
The space of {\it K\"ahler--Weyl tensors}
$\mathfrak{K}_{\pm,\mathfrak{W}}:=\mathfrak{K}_\pm\cap\mathfrak{W}$ is
obtained by imposing the symmetries of Equation~(\ref{eqn-1.b}) and of
Equation~(\ref{eqn-1.e}) and the space of {\it K\"ahler--Riemann tensors}
$\mathfrak{K}_{\pm,\mathfrak{R}}:=\mathfrak{K}_\pm\cap\mathfrak{R}$ is
obtained by imposing in addition the symmetry of Equation~(\ref{eqn-1.d}).
The structure groups are given by:
\begin{eqnarray*}
&&\mathcal{O}:=\{T\in\operatorname{GL}:T^*\langle\cdot,\cdot\rangle
     =\langle\cdot,\cdot\rangle\},\\
&&\mathcal{U}_\pm:=\{T\in\mathcal{O}:TJ_\pm=J_\pm T\},\\
&&\mathcal{U}_\pm^\star:=\{T\in\mathcal{O}:TJ_\pm=J_\pm T\text{ or }
     TJ_\pm=-J_\pm T\}\,.
\end{eqnarray*}
It is convenient to work with the $\mathbb{Z}_2$ extensions
$\mathcal{U}_\pm^\star$ which permits us to interchange the roles of
$J_\pm$ and $-J_\pm$. Let $\chi$ be the $\mathbb{Z}_2$ valued character of
$\mathcal{U}_\pm^\star$ so that if $T\in\mathcal{U}_\pm^\star$, then $$J_\pm
T=\chi(T)TJ_\pm\,.$$ One then has that $T^*\Omega_\pm=\chi(T)\Omega_\pm$. Let
$$\Lambda_{0,J_\pm}^2=\{\Phi\in\Lambda^2(V^*):\Phi\perp\Omega_\pm\}\,.$$ 

\begin{theorem}\label{thm-1.3}
Let $(V,\langle\cdot,\cdot\rangle,J_\pm)$ be a para/pseudo-Hermitian vector
space.
\begin{enumerate}
\item If $m\ge6$, then
$\mathfrak{K}_{\pm,\mathfrak{W}}=\mathfrak{K}_{\pm,\mathfrak{R}}$.
\item If $m=4$, then
$\mathfrak{K}_{\pm,\mathfrak{W}}=\mathfrak{K}_{\pm,\mathfrak{R}}
   \oplus L_{0,J_\pm}$ where $L_{0,J_\pm}\approx\Lambda_{0,J_\pm}^2$ as a
$\mathcal{U}_\pm^\star$ module.
\end{enumerate}
\end{theorem}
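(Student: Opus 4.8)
The plan is to study the $\mathcal{U}_\pm^\star$-equivariant linear map $\lambda:\mathfrak{K}_{\pm,\mathfrak{W}}\to\Lambda^2(V^*)$, $\lambda(R):=\rho_a$. Equivariance holds because each $T\in\mathcal{U}_\pm^\star$ preserves $\langle\cdot,\cdot\rangle$, hence commutes with the contraction defining $\rho$, and preserves $\mathfrak{K}_\pm$ (it commutes or anticommutes with $J_\pm$); moreover $T^*\Omega_\pm=\chi(T)\Omega_\pm$, so $\mathbb R\,\Omega_\pm$ and its orthogonal complement $\Lambda^2_{0,J_\pm}$ are $\mathcal{U}_\pm^\star$-submodules of $\Lambda^2(V^*)$. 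The third line of (\ref{eqn-1.b}) says that the $\langle\cdot,\cdot\rangle$-symmetric part of $R$ in its last two slots is $-\tfrac2m\,\rho_a(x,y)\langle z,w\rangle$; thus $\rho_a=0$ is equivalent to (\ref{eqn-1.d}), which identifies $\ker\lambda=\mathfrak{K}_{\pm,\mathfrak{R}}$. Since $\mathcal{U}_\pm^\star$ acts completely reducibly, the theorem reduces to computing $\operatorname{Image}(\lambda)$ as a module: then $\mathfrak{K}_{\pm,\mathfrak{W}}=\mathfrak{K}_{\pm,\mathfrak{R}}\oplus L_{0,J_\pm}$ with $L_{0,J_\pm}\cong\operatorname{Image}(\lambda)$.

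To compute the image I would use that the last-two-slot symmetric part of any $R\in\mathfrak{K}_{\pm,\mathfrak{W}}$ is pinned down by $\rho_a$ and automatically satisfies (\ref{eqn-1.e}), since $\langle\cdot,\cdot\rangle$ and $\Omega_\pm$ are both multiplied by $\mp$ under $(z,w)\mapsto(J_\pm z,J_\pm w)$. Writing $\omega:=\rho_a$ and $R=R^\omega_{\mathrm{sym}}+A$ with $R^\omega_{\mathrm{sym}}(x,y,z,w):=-\tfrac2m\,\omega(x,y)\langle z,w\rangle$ and $A$ antisymmetric in each pair, the third line of (\ref{eqn-1.b}) forces $\lambda(R)=\omega$ (the antisymmetric $A$ contributes nothing to the symmetrization). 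One then checks that $R\in\mathfrak{K}_{\pm,\mathfrak{W}}$ holds if and only if $A$ satisfies (\ref{eqn-1.e}) together with the twisted first Bianchi identity $b(A)=\tfrac2m\,d(\omega)$, where $b(A):=\sum_{\mathrm{cyc}}A(x,y,z,w)$ and $d(\omega):=\sum_{\mathrm{cyc}}\omega(x,y)\langle z,w\rangle$ both lie in $\Lambda^3V^*\otimes V^*$. Consequently $\operatorname{Image}(\lambda)=\{\omega\in\Lambda^2(V^*):d(\omega)\in b(\mathfrak A_\pm)\}$, where $\mathfrak A_\pm$ is the space of tensors satisfying (\ref{eqn-1.e}) and antisymmetric in each pair. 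This one reformulation encodes both the constraints on $\rho_a$ and its geometric realizability, since an element of $\mathfrak A_\pm$ solving the twisted Bianchi equation produces, together with $R^\omega_{\mathrm{sym}}$, a Kähler--Weyl tensor with prescribed $\rho_a=\omega$.

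The crux is the linear-algebraic comparison of $d$ with $b|_{\mathfrak A_\pm}$, and this is exactly where the dimension enters. I would split $\mathfrak A_\pm$ and $\Lambda^3V^*\otimes V^*$ into $\mathcal{U}_\pm^\star$-isotypic components assembled from $\omega$, $\langle\cdot,\cdot\rangle$ and $\Omega_\pm$ --- for example $\omega\otimes\Omega_\pm$ and the Kähler symmetrizations of $\omega\otimes\langle\cdot,\cdot\rangle$ --- and solve $b(A)=\tfrac2m d(\omega)$ coefficient by coefficient. I expect that for $m=4$ the defect $d(\omega)$ lies in $b(\mathfrak A_\pm)$ precisely when the $\Omega_\pm$-component of $\omega$ vanishes, giving $\operatorname{Image}(\lambda)=\Lambda^2_{0,J_\pm}$, while for $m\ge6$ the same matching is over-determined and forces $\omega=0$, giving $\operatorname{Image}(\lambda)=0$ and hence assertion~(1), the algebraic shadow of Theorem~\ref{thm-1.1}. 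The main obstacle is this computation, which cannot be shortcut by taking traces: contracting (\ref{eqn-1.e}) against the traces defining $\rho_a$ only reproduces tautologies, so one must insert $J_\pm$ into the first Bianchi identity itself to detect both the orthogonality $\rho_a\perp\Omega_\pm$ and the higher-dimensional collapse. Finally, since the map $\omega\mapsto R^\omega_{\mathrm{sym}}+A_\omega$ can be chosen $\mathcal{U}_\pm^\star$-equivariantly, it realizes the isomorphism $L_{0,J_\pm}\approx\Lambda^2_{0,J_\pm}$ of $\mathcal{U}_\pm^\star$-modules and completes the decomposition.
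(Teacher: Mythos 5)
Your reduction is sound and genuinely different in spirit from the paper's argument: the identification $\ker\lambda=\mathfrak{K}_{\pm,\mathfrak{R}}$ via the third line of Equation~(\ref{eqn-1.b}), the splitting $\mathfrak{K}_{\pm,\mathfrak{W}}\approx\ker\lambda\oplus\operatorname{Image}(\lambda)$ from complete reducibility (Lemma~\ref{lem-2.1}), and the equivalence of $\omega\in\operatorname{Image}(\lambda)$ with solvability of $b(A)=\frac2m d(\omega)$ for $A\in\mathfrak{A}_\pm$ are all correct. One point deserves more care than your parenthetical gives it: for $R=R^\omega_{\mathrm{sym}}+A$ to satisfy the third line of (\ref{eqn-1.b}) you need $\rho_a(R)=\omega$, and since $\rho_a(R^\omega_{\mathrm{sym}})=\frac2m\omega$, this requires $\rho_a(A)=\frac{m-2}{m}\omega$; this is not the statement that ``$A$ contributes nothing,'' but it does follow by contracting the twisted Bianchi identity over the first and fourth slots, so the reformulation survives.

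The genuine gap is that the theorem \emph{is} the determination of $\operatorname{Image}(\lambda)$, and you never carry it out. The paragraph beginning ``I expect that for $m=4$\dots'' asserts without computation (i) that $d(\Omega_\pm)\notin b(\mathfrak{A}_\pm)$, (ii) that $d(\omega)\in b(\mathfrak{A}_\pm)$ for every $\omega\perp\Omega_\pm$ when $m=4$, and (iii) that no nonzero $\omega$ works when $m\ge6$. These three facts are the entire content of the statement, and nothing in your write-up distinguishes $m=4$ from $m\ge6$, or $\Omega_\pm$ from its orthogonal complement; you have reformulated the theorem, not proved it. For comparison, the paper obtains (i) by multiplicity counting: by Theorems~\ref{thm-2.3} and~\ref{thm-2.4} the module $\chi$ appears with multiplicity one in $\mathfrak{W}$, so the only possible copy inside $\mathfrak{K}_{\pm,\mathfrak{W}}$ would be $\Xi(\Omega_\pm)\cdot\mathbb{R}$, and a two-line evaluation shows $\Xi(\Omega_\pm)$ violates Equation~(\ref{eqn-1.e}). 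It obtains (ii) not by solving the linear system but geometrically: Lemma~\ref{lem-4.1} exhibits K\"ahler--Weyl structures whose $\rho_a$ has nonzero components in both $\Lambda_{0,\mp,J_\pm}^2$ and $\Lambda_{\pm,J_\pm}^2$, and Theorem~\ref{thm-2.4} caps the multiplicity of each at one inside $\mathfrak{K}_{\pm,\mathfrak{W}}$. For (iii) the paper simply cites \cite{GS11}. To close your gap along your own route you would have to perform the isotypic analysis of $b:\mathfrak{A}_\pm\rightarrow\Lambda^3V^*\otimes V^*$ that you describe: make an equivariant ansatz for $A$ built from $\omega$, $J_\pm^*\omega$, $\langle\cdot,\cdot\rangle$, and $\Omega_\pm$, solve for the coefficients, and exhibit the obstruction when $\omega$ is a multiple of $\Omega_\pm$ and the overdetermination when $m\ge6$; alternatively, import the paper's ingredients (the Higa decomposition plus Lemma~\ref{lem-4.1}) to compute the image.
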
 

This is one of the facts about $4$-dimensional geometry that distinguishes it
from the higher dimensional setting; the module $L_{0,J_\pm}^2$ provides
additional curvature possibilities if
$m=4$. 

Let\ $(V,\langle\cdot,\cdot\rangle,J_\pm)$ be a para/pseudo-Hermitian vector
space and let
$A\in\mathfrak{K}_{\pm,\mathfrak{W}}$. We say that $A$ is {\it
geometrically realizable} if there exists a K\"ahler--Weyl structure
$(M,g,J_\pm,\nabla)$, $P\in M$, and an isomorphism $\phi:T_PM\rightarrow V$
so that
$\phi^*\langle\cdot,\cdot\rangle=g_P$, $\phi^*J_\pm=J_{\pm,P}$, and
$J^*A=R_P$.

\begin{theorem}\label{thm-1.4}
Every element of $\mathfrak{K}_{\pm,\mathfrak{W}}$ is
geometrically realizable.
\end{theorem}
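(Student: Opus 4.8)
The plan is to use Theorem~\ref{thm-1.2}, which reduces a K\"ahler--Weyl structure in dimension $4$ to the choice of a para/pseudo-Hermitian structure $(M,g,J_\pm)$: the connection $\nabla$ and the form $\phi=\pm\frac12 J_\pm^*\delta\Omega_\pm$ are then determined, and the resulting curvature automatically lies in $\mathfrak{K}_{\pm,\mathfrak{W}}$. Thus, given $A\in\mathfrak{K}_{\pm,\mathfrak{W}}$, it suffices to exhibit a germ of a Hermitian metric on $\mathbb{R}^4$, equipped with the standard constant $J_\pm$, whose associated Weyl curvature at the origin equals $A$. I would restrict attention to metrics $g$ with $g(0)=\langle\cdot,\cdot\rangle$ and vanishing $1$-jet. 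Since the first derivatives of $g$ vanish at the origin, the Christoffel symbols and $\phi$ itself vanish there, so the quadratic correction terms in Equation~(\ref{eqn-1.a}) and the $\phi\otimes\phi$ contributions to the curvature drop out at the origin; consequently $R_0$ is a \emph{linear} function $\Phi(j^2g)$ of the $2$-jet of $g$, of the schematic form $R_0=R^g_0+(\text{a term linear in }\nabla^g\phi(0))$. As the constant $J_\pm$ preserves the class of Hermitian $2$-jets and commutes with the construction, $\Phi$ is a linear (indeed $\mathcal{U}_\pm^\star$-equivariant) map from Hermitian $2$-jets into $\mathfrak{K}_{\pm,\mathfrak{W}}$, and the theorem reduces to the surjectivity of $\Phi$.

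I would verify surjectivity using the decomposition $\mathfrak{K}_{\pm,\mathfrak{W}}=\mathfrak{K}_{\pm,\mathfrak{R}}\oplus L_{0,J_\pm}$ of Theorem~\ref{thm-1.3}(2). For the K\"ahler--Riemann summand I would take $g$ to be a genuine (para-/pseudo-)K\"ahler metric given by a K\"ahler potential with free quartic $4$-jet; then $\Omega_\pm$ is parallel, so $\delta\Omega_\pm=0$, $\phi=0$, $\nabla=\nabla^g$, and $R_0$ lands in $\mathfrak{K}_{\pm,\mathfrak{R}}$. The classical realizability of algebraic K\"ahler curvature tensors by such potentials then yields surjectivity onto $\mathfrak{K}_{\pm,\mathfrak{R}}$. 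For the extra summand $L_{0,J_\pm}\approx\Lambda^2_{0,J_\pm}$, which by Equation~(\ref{eqn-1.c}) is detected through the alternating Ricci tensor via $d\phi=-\tfrac1m\rho_a$, I would instead use a complementary family of Hermitian $2$-jets for which $\delta\Omega_\pm$ acquires prescribed first derivatives at the origin; computing $d\phi(0)$ as a linear function of the $2$-jet and exhibiting $2$-jets that realize a spanning set of $\Lambda^2_{0,J_\pm}$ then shows that $\Phi$ projects onto this summand.

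Since $\Phi$ is linear, these two computations suffice: the K\"ahler potentials show that the image of $\Phi$ contains the full summand $\mathfrak{K}_{\pm,\mathfrak{R}}$, while the $\delta\Omega_\pm$ family shows that the image projects onto $L_{0,J_\pm}$; together these force $\Phi$ to be onto. Concretely, for a given $A$ one takes the $2$-jet of $g$ to be the jet realizing the $\mathfrak{K}_{\pm,\mathfrak{R}}$-component of $A$, corrected by the $\delta\Omega_\pm$-jet that produces the $L_{0,J_\pm}$-component. I expect the main obstacle to be the second step, namely tracking explicitly how $\phi=\pm\frac12 J_\pm^*\delta\Omega_\pm$ and its first derivatives feed into the Weyl curvature and confirming that the linear map $j^2g\mapsto d\phi(0)$ surjects onto $\Lambda^2_{0,J_\pm}$, while checking that these $\delta\Omega_\pm$-perturbations do not obstruct the independently prescribed K\"ahler--Riemann part. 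Handling the pseudo-Hermitian ($-$) and para-Hermitian ($+$) cases uniformly, with the correct signs in $J_\pm^*g=\mp g$ and in Equation~(\ref{eqn-1.e}), is the remaining point requiring care.
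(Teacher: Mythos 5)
Your proposal is correct and is essentially the paper's own proof: the paper also works with germs of para/pseudo-Hermitian metrics on $\mathbb{R}^4$ with the standard $J_\pm$ of Equation~(\ref{eqn-4.a}), normalized so that $g(0)=g_0$ and $dg(0)=0$ (so the curvature of the unique K\"ahler--Weyl connection is a $\mathcal{U}_\pm^\star$-equivariant linear function of the $2$-jet), realizes the summand $\mathfrak{K}_{\pm,\mathfrak{R}}$ by the K\"ahler realization results of \cite{BGM10}, produces the extra summand through an explicit Hermitian perturbation detected via $\rho_a$ (Lemma~\ref{lem-4.1}), and concludes by the decomposition of Theorem~\ref{thm-1.3}. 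The only difference is one of economy: where you propose to span $\Lambda^2_{0,J_\pm}$ by hand with a family of $2$-jets, the paper invokes equivariance and the multiplicity-one statement of Lemma~\ref{lem-2.1}, so the single example $f=\pm\frac14x_1x_3$, whose $\rho_a=dx^1\wedge dx^3$ has nonzero components in both $\Lambda^2_{\pm,J_\pm}$ and $\Lambda^2_{0,\mp,J_\pm}$, already suffices.
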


Theorem~\ref{thm-1.4} means that Equation~(\ref{eqn-1.b}) and
Equation~(\ref{eqn-1.e}) generate the universal curvature symmetries of the
curvature tensor of a K\"ahler--Weyl structure; there are no hidden
symmetries. The fact that
$\mathfrak{K}_{\pm,\mathfrak{W}}\ne\mathfrak{K}_{\pm,\mathfrak{R}}$ in
dimension $4$ permits us to find K\"ahler--Weyl structures which do not
satisfy the symmetry of Equation~(\ref{eqn-1.d}) and which therefore are not
trivial. Thus it is the curvature decomposition of Theorem~\ref{thm-1.3}
which is at the heart of the difference between the $4$-dimensional setting
and the higher dimensional setting exemplified by Theorem~\ref{thm-1.1} and
by Theorem~\ref{thm-1.2}.

The {\it Gray symmetrizer} is
defined by setting:
\begin{eqnarray}
&&\mathcal{G}_\pm(A)(x,y,z,w):=A(x,y,z,w)+A(J_\pm x,J_\pm
y,J_\pm z,J_\pm w)\nonumber\\
&&\qquad\pm A(J_\pm x,J_\pm y,z,w)\pm A(x,y,J_\pm z,J_\pm w)\pm A(J_\pm
x,y,J_\pm z,w)\label{eqn-1.f}\\
&&\qquad\pm A(x,J_\pm y,z,J_\pm w)\pm A(J_\pm x,y,z,J_\pm w)\pm A(x,J_\pm
y,J_\pm z,w)\,.\nonumber
\end{eqnarray}
Gray \cite{gray} showed that the integrability of the (para)-complex
structure gives rise to the additional curvature identity
$\mathcal{G}(R^g)=0$. Although his result was originally stated only in the
Hermitian setting, it extends easily to the para/pseudo-Hermitian setting
\cite{BGKN09,BGNV09}. In fact, this identity remains valid in the context of
Weyl geometry:

\begin{theorem}\label{thm-1.5}
Let $(M,g,J_\pm)$ be a para/pseudo-Hermitian manifold and let $\nabla$ be a Weyl
connection. Then $\mathcal{G}(R^\nabla)=0$.
\end{theorem}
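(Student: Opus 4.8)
The plan is to reduce the identity to the already-established Levi--Civita case and then dispose of the Weyl correction by a purely algebraic cancellation. First I would write $\nabla=\nabla^g+S$, where Equation~\eqref{eqn-1.a} gives the difference tensor $S_xy=\phi(x)y+\phi(y)x-g(x,y)\phi^\star$. Feeding this into the definition of $\mathcal{R}$ yields the standard formula
\[
\mathcal{R}^\nabla(x,y)=\mathcal{R}^g(x,y)+(\nabla^g_xS)_y-(\nabla^g_yS)_x+[S_x,S_y],
\]
so that, lowering the last index with $g$, one has $R^\nabla=R^g+D$ with $D(x,y,z,w):=g\bigl(((\nabla^g_xS)_y-(\nabla^g_yS)_x+[S_x,S_y])z,w\bigr)$. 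Because $\mathcal{G}_\pm$ is linear, it then suffices to prove $\mathcal{G}_\pm(R^g)=0$ and $\mathcal{G}_\pm(D)=0$ separately. The first is exactly Gray's identity for the Levi--Civita connection in the para/pseudo-Hermitian setting \cite{gray,BGKN09,BGNV09}, which holds since $J_\pm$ is integrable; this is the only place integrability enters.

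Next I would compute $D$ explicitly, writing $\psi:=\nabla^g\phi$. The linear part produces monomials built from $\psi$ and $g$, while the bracket term contributes monomials built from $\phi\otimes\phi$, $|\phi|^2g$, and $g$; for instance $g([S_x,S_y]z,w)$ equals
\[
[\phi(y)\phi(z)-|\phi|^2g(y,z)]g(x,w)+[|\phi|^2g(x,z)-\phi(x)\phi(z)]g(y,w)+[\phi(x)g(y,z)-\phi(y)g(x,z)]\phi(w).
\]
The essential structural point is that, after antisymmetrization in $x,y$, every surviving monomial of $D$ carries an explicit factor $g(\cdot,\cdot)$ pairing two of the four arguments; in particular the would-be quartic term $\phi(x)\phi(y)\phi(z)\phi(w)$ drops out. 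Confirming this for each monomial is the main bookkeeping step, and it is precisely what makes the following lemma applicable.

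The lemma I would isolate is: for any bilinear form $B\in\otimes^2V^*$, the symmetrizer $\mathcal{G}_\pm$ annihilates every $4$-tensor of the form $g(\cdot,\cdot)\,B(\cdot,\cdot)$, regardless of which two of the four slots carry the $g$-factor. To prove it I would group the eight terms of Equation~\eqref{eqn-1.f} according to how many of the two $g$-slots receive a $J_\pm$. The four terms that insert $J_\pm$ into neither or both $g$-slots cancel in pairs, the cancellation being forced by $g(J_\pm a,J_\pm b)=\mp g(a,b)$ together with the sign conventions of Equation~\eqref{eqn-1.f}. The remaining four terms hit exactly one $g$-slot; they share a common $B$-factor and come multiplied by $g(J_\pm a,b)+g(a,J_\pm b)$, which vanishes because $\Omega_\pm(a,b)=g(a,J_\pm b)$ is skew-symmetric, i.e. $J_\pm$ is $g$-skew-adjoint. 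Both facts are immediate consequences of $J_\pm^*g=\mp g$.

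Assembling the pieces gives $\mathcal{G}_\pm(R^\nabla)=\mathcal{G}_\pm(R^g)+\mathcal{G}_\pm(D)=0$. I expect the genuinely delicate point to be not the algebra of the lemma, which is clean once organized by insertion pattern, but the verification that the Weyl correction $D$ never produces a monomial free of a $g$-factor; granting that, the theorem follows. Finally, it is worth noting that the argument never uses $\nabla J_\pm=0$, so the identity in fact holds for an arbitrary Weyl connection, exactly as the statement asserts.
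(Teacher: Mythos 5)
Your proposal is correct, but it takes a genuinely different route from the paper. The paper's proof is representation-theoretic: it first uses the conformal gauge freedom ($\tilde g=e^{2f}g$ replaces $\phi$ by $\phi-df$) to assume $\phi(P)=0$, so that $R^\nabla(P)-R^g(P)$ becomes \emph{linear} in the derivatives of $\phi$ and defines a $\mathcal{U}_\pm^\star$-equivariant map $\Theta:\otimes^2T_P^*M\rightarrow\mathfrak{W}_P$; it then invokes the fact that $\frac18\mathcal{G}_\pm$ is orthogonal projection onto the module $W_{\pm,7}$ of Theorem~\ref{thm-2.4}, which is not isomorphic to any irreducible summand of $\otimes^2T_P^*M$, so Lemma~\ref{lem-2.1} forces $\mathcal{G}_\pm\circ\Theta=0$. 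You instead compute the difference tensor $D=R^\nabla-R^g$ explicitly --- including the terms quadratic in $\phi$, which the paper's gauge normalization avoids --- observe that every monomial of $D$ has the form $g(\cdot,\cdot)B(\cdot,\cdot)$, and prove directly that $\mathcal{G}_\pm$ annihilates all such tensors; your grouping of the eight terms of Equation~(\ref{eqn-1.f}) (pairwise cancellation when $J_\pm$ hits both or neither $g$-slot via $g(J_\pm a,J_\pm b)=\mp g(a,b)$, and the factorization through $g(J_\pm a,b)+g(a,J_\pm b)=0$ when it hits exactly one) is sound in both the para and pseudo cases, and the difference-tensor formula and the bracket computation check out. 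In effect your lemma is the explicit computational content of the paper's Schur-type argument: the tensors $g\otimes B$ span the image of $\Theta$ (and of the quadratic correction), and showing $\mathcal{G}_\pm$ kills them is exactly showing that image has no $W_{\pm,7}$-component. What the paper's route buys is brevity given the machinery of Section~\ref{sect-2} already in place; what yours buys is self-containedness --- no decomposition theorems, no projection formula, no conformal normalization --- plus a reusable algebraic lemma. Both proofs rest equally on the cited extension of Gray's identity $\mathcal{G}_\pm(R^g)=0$ for integrable $J_\pm$, and both correctly never use $\nabla J_\pm=0$. One cosmetic remark: the quartic monomial $\phi(x)\phi(y)\phi(z)\phi(w)$ never actually appears in $g([S_x,S_y]z,w)$ even before antisymmetrization in $x,y$ (the quadratic factor $\phi(S_yz)=2\phi(y)\phi(z)-|\phi|^2g(y,z)$ always multiplies $g(x,w)$), so no cancellation is needed there; this does not affect your argument.
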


Here is a brief outline of this paper. In Section \ref{sect-2}, we review
some decomposition results that are needed. In Section
\ref{sect-3}, we establish Theorem \ref{thm-1.2}; we shall not follow the
discussion in \cite{KK10} but rather base our discussion on the
decomposition results of \cite{BGGH11,GH80} given in Theorem~\ref{thm-2.5} as
that will be more convenient for our further development. In Section
\ref{sect-4}, we prove Theorem \ref{thm-1.3}; we restrict to the case $m=4$
since the case
$m\ge6$ is treated in \cite{GS11}. We also verify Theorem \ref{thm-1.4}.
Since every element of
$\mathfrak{K}_{\pm,\mathfrak{R}}$ can be geometrically realized by a
para/pseudo-K\"ahler manifold \cite{BGM10}, Theorem~\ref{thm-1.4} follows
from Theorem~\ref{thm-1.3} if $m\ge6$. It therefore suffices to prove
Theorem~\ref{thm-1.4} if $m=4$. In Section~\ref{sect-5}, we use
Theorem~\ref{thm-1.3} to prove Theorem~\ref{thm-1.5}.

\section{Decomposition results}\label{sect-2}
In Section~\ref{sect-2.1}, we recall the fundamental facts of group
representation theory that we shall need; we work in the context of
$\mathcal{U}_\pm^\star$ modules as many of the relevant results fail for
$\mathcal{U}_+$. In Section~\ref{sect-2.2}, we review the Tricerri-Vanhecke
decomposition of
$\mathfrak{R}$ as a $\mathcal{U}_\pm^\star$ module. In
Section~\ref{sect-2.3}, we combine the Higa decomposition of $\mathfrak{W}$
with the Tricerri-Vanhecke decomposition to decompose $\mathfrak{W}$ as a
$\mathcal{U}_\pm^\star$ module. In Section~\ref{sect-2.4}, we present the
Gray-Hervella decomposition of the space of covariant derivatives of the
K\"ahler form as a $\mathcal{U}_\pm^\star$ module. 

\subsection{Representation Theory}\label{sect-2.1}
Let $(V,\langle\cdot,\cdot\rangle,J_\pm)$ be a para/pseudo-Hermitian
vector space. Extend $\langle\cdot,\cdot\rangle$ to a
non-degenerate symmetric bilinear form on $\otimes^kV^*$ by setting:
$$
\langle(v_1\otimes\dots\otimes v_k),(w_1\otimes\dots\otimes w_k)\rangle:=\prod_{i=1}^k\langle
v_i,w_i\rangle\,.
$$
Use $\langle\cdot,\cdot\rangle$ to identify $\otimes^kV$
with $\otimes^kV^*$ henceforth. The natural action of
$\mathcal{U}_\pm^\star$ on $\otimes^kV^*$ by pullback is an isometry making
any
$\mathcal{U}_\pm^\star$-invariant subspace of
$\otimes^kV^*$ into a $\mathcal{U}_\pm^\star$ module.
We refer to
\cite{BGGH11} for the proof of the following result; this result fails for
the group $\mathcal{U}_+$ and for that reason we choose to work with the
groups $\mathcal{U}_\pm^\star$.
\begin{lemma}\label{lem-2.1}
Let $(V,\langle\cdot,\cdot\rangle,J_\pm)$ be a para/pseudo-Hermitian vector
space. Let $\xi$ be a
$\mathcal{U}_\pm^\star$ submodule of
$\otimes^kV$.
\begin{enumerate}\item $\langle\cdot,\cdot\rangle$ is non-degenerate on $\xi$.
\item There is an orthogonal direct sum decomposition $\xi=\eta_1\oplus\dots\oplus\eta_k$ where the $\eta_i$
are irreducible $\mathcal{U}_\pm^\star$ modules.
\item If $\xi_1$ and $\xi_2$ are inequivalent irreducible $\mathcal{U}_\pm^\star$ submodules of $\xi$, then $\xi_1\perp\xi_2$.
\item The multiplicity with which an irreducible representation appears in $\xi$ is independent of the decomposition in
{\rm(2)}.
\item If $\xi_1$ appears with multiplicity $1$ in $\xi$ and if $\eta$ is any $\mathcal{U}_\pm^\star$ submodule of $\xi$, then
either
$\xi_1\subset\eta$ or else $\xi_1\perp\eta$.
\item If $0\rightarrow\xi_1\rightarrow\xi\rightarrow\xi_2\rightarrow0$ is a short exact sequence of
$\mathcal{U}_\pm^\star$ modules, then
$\xi\approx\xi_1\oplus\xi_2$ as a $\mathcal{U}_\pm^\star$ module.
\end{enumerate}
\end{lemma}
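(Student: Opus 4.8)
The plan is to deduce all six assertions from two structural facts about $\mathcal{U}_\pm^\star$: complete reducibility of its finite-dimensional representations, and a non-isotropy property for its irreducible submodules of $\otimes^kV$. For the first, one identifies $\mathcal{U}_-$ with a pseudo-unitary group $U(p,q)$, and $\mathcal{U}_+$ with $\operatorname{GL}(\bar m,\mathbb{R})$ acting on the two totally isotropic eigenspaces $V_\pm$ of $J_+$, so that in each case $\mathcal{U}_\pm^\star$ is the $\mathbb{Z}_2$-extension of the real points of a reductive algebraic group. By the unitarian trick --- equivalently, since each such group has a compact real form --- every finite-dimensional representation occurring in a tensor power $\otimes^kV$ is completely reducible. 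This already yields (6): given a short exact sequence, $\xi_1$ admits a complementary $\mathcal{U}_\pm^\star$-submodule, so $\xi\approx\xi_1\oplus\xi_2$.

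The crux is the following non-isotropy lemma: no nonzero irreducible $\mathcal{U}_\pm^\star$-submodule $\eta$ of $\otimes^kV$ is totally isotropic; equivalently, $\langle\cdot,\cdot\rangle$ restricts to a non-degenerate form on $\eta$. This is exactly where the $\mathbb{Z}_2$-extension is indispensable and where the corresponding statement fails for $\mathcal{U}_+$: already for $k=1$ the eigenspace $V_+$ is an isotropic $\mathcal{U}_+$-submodule of $V$. An element $T_0\in\mathcal{U}_\pm^\star$ with $J_\pm T_0=-T_0J_\pm$ intertwines the eigenspace decompositions of $\otimes^kV$ determined by $J_\pm$ and by $-J_\pm$, and thereby exchanges isotropic $\mathcal{U}_\pm$-constituents with their duals --- for $k=1$ it interchanges $V_+$ and $V_-$. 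Passing to $\mathcal{U}_\pm^\star$ therefore glues such dual isotropic pairs into single irreducibles on which the ambient hyperbolic pairing is non-degenerate. Making this rigorous in general requires Clifford theory for the index-two inclusion $\mathcal{U}_\pm\subset\mathcal{U}_\pm^\star$, combining the (at most two) swapped $\mathcal{U}_\pm$-constituents of $\eta$ with the non-degeneracy of the form on $\otimes^kV$; this is the main obstacle, and the detailed verification is the content of \cite{BGGH11}.

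Granting these two facts, the remaining assertions follow formally. For (1), the radical $\xi\cap\xi^\perp$ is a $\mathcal{U}_\pm^\star$-submodule; any irreducible summand of it lies in both $\xi$ and $\xi^\perp$ and so is totally isotropic, contradicting the non-isotropy lemma, whence $\xi\cap\xi^\perp=0$. For (2), complete reducibility furnishes an irreducible submodule $\eta_1\subset\xi$; by the lemma the form is non-degenerate on $\eta_1$, so $\xi=\eta_1\oplus(\eta_1^\perp\cap\xi)$ with the second factor again a submodule, and induction on $\dim\xi$ produces an orthogonal decomposition into irreducibles. For (3), the non-isotropy lemma identifies each irreducible with its dual via the form, so an invariant pairing $\xi_1\times\xi_2\to\mathbb{R}$ is an element of $\operatorname{Hom}_{\mathcal{U}_\pm^\star}(\xi_1,\xi_2)$, which vanishes for inequivalent $\xi_1,\xi_2$ by Schur's lemma; thus $\xi_1\perp\xi_2$.

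Finally, (4) holds because for a completely reducible module the isotypic component attached to an irreducible is intrinsically defined as the sum of all submodules isomorphic to it, so the multiplicity is independent of the chosen decomposition. For (5), multiplicity one means the $\xi_1$-isotypic component of $\xi$ is the single copy $\xi_1$; given a submodule $\eta\subset\xi$, either some irreducible summand of $\eta$ is isomorphic to $\xi_1$ --- in which case it coincides with this unique copy and $\xi_1\subset\eta$ --- or none is, in which case (3) gives $\xi_1\perp\eta$. Assertion (6) was disposed of above.
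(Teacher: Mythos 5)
The paper does not actually prove Lemma~\ref{lem-2.1}: it states ``We refer to \cite{BGGH11} for the proof,'' so there is no internal argument to compare yours against step by step. Measured against that, your proposal is a correct and genuinely more detailed treatment. The identifications $\mathcal{U}_-\cong U(p,q)$ and $\mathcal{U}_+\cong\operatorname{GL}(\bar m,\mathbb{R})$ (acting on the isotropic eigenspaces $V_\pm$ of $J_+$) are right; complete reducibility for the algebraic representations occurring inside $\otimes^kV$ does follow from reductivity via the unitarian trick (your hedge ``occurring in a tensor power'' is needed, since not every continuous representation of these non-compact groups is completely reducible); and your formal derivations of assertions (1)--(6) from the two structural facts are all sound: the radical argument for (1), the inductive orthogonal splitting for (2), the Schur-plus-self-duality argument for (3), isotypic components for (4) and (5), and splitting of exact sequences for (6). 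Your identification of $V_+\subset V$ as an isotropic $\mathcal{U}_+$-invariant subspace is exactly the phenomenon the paper alludes to when it says the lemma ``fails for the group $\mathcal{U}_+$.'' The one caveat: the crux --- that no irreducible $\mathcal{U}_\pm^\star$-submodule of $\otimes^kV$ is totally isotropic, proved by gluing dual isotropic $\mathcal{U}_\pm$-constituents via an anticommuting element $T_0$ --- is asserted but not proved; you delegate it to \cite{BGGH11}. Since that is the very reference the paper cites for the whole lemma, this does not put you behind the paper, but you should be clear that as a self-contained proof your write-up is incomplete at precisely this point: the Clifford-theoretic analysis of the index-two inclusion $\mathcal{U}_\pm\subset\mathcal{U}_\pm^\star$ is the substance of the result, not a routine verification.
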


\subsection{The Tricerri-Vanhecke decomposition}\label{sect-2.2}
Decompose
$\otimes^2V^*=S^2\oplus\Lambda^2$ as the direct sum of the symmetric 
and of the alternating $2$-tensors, respectively. Set
\begin{eqnarray*}
&&S_{+,J_\pm}^{2}:=\{\theta\in S^2:J_\pm^*\theta=+\theta\},\qquad\qquad
\Lambda_{+,J_\pm}^{2}:=\{\theta\in\Lambda^2:J_\pm^*\theta=+\theta\}\\
&&S_{-,J_\pm}^{2}:=\{\theta\in S^2:J_\pm^*\theta=-\theta\},\qquad\qquad
\Lambda_{-,J_\pm}^{2}:=\{\theta\in\Lambda^2:J_\pm^*\theta=-\theta\}\,.
\end{eqnarray*}
We have $\langle\cdot,\cdot\rangle\in S_{\mp,J_\pm}^2$ and
$\Omega_\pm\in\Lambda_{\mp,J_\pm}^2$. This permits us to express
\begin{eqnarray*}
&&S_{\mp,J_\pm}^2=\langle\cdot,\cdot\rangle\cdot\mathbb{R}\oplus 
S_{0,\mp,J_\pm}^2\quad\text{and}\quad
\Lambda_{\mp,J_\pm}^2=\Omega_\pm\cdot\mathbb{R}\oplus\Lambda_{0,\mp,J_\pm}^2
\quad\text{where}\\
&&S_{0,\mp,J_\pm}^{2}:=\{\theta\in
S_{\mp,J_\pm}^2:\theta\perp\langle\cdot,\cdot\rangle\}\quad\text{and}\quad
\Lambda_{0,\mp,J_\pm}^{2}:=
\{\theta\in\Lambda_{\mp,J_\pm}^2:\theta\perp\Omega_\pm\}\,.
\end{eqnarray*}
This gives the following orthogonal decomposition of $\otimes^2V^*$ into
irreducible and inequivalent $\mathcal{U}_\pm^\star$ modules:
\begin{equation}\label{eqn-2.a}
\otimes^2V^*=S_{\pm,J_\pm}^2\oplus \mathbb{R}\oplus 
S_{0,\mp,J_\pm}^2\oplus
\Lambda_{\pm,J_\pm}^2\oplus\chi\oplus\Lambda_{0,\mp,J_\pm}^2\,.
\end{equation}

The following decompositions were first established in \cite{M73,S73,TV81}
for almost complex structures in the positive definite case; we refer to
\cite{BGN11} for the extension to the higher signature setting and to the
almost para-complex case:

\goodbreak\begin{theorem}\label{thm-2.2}
Adopt the notation established above. We have an orthogonal direct sum
decompositions of non-trivial irreducible $\mathcal{U}_\pm^\star$ modules:
\begin{eqnarray*}
&&\mathfrak{R}=
\left\{\begin{array}{ll}
\oplus_{1\le i\le 10}W_{\pm,i}&\text{ if }m\ge 8\\
\oplus_{1\le i\le 10,i\ne 6}W_{\pm,i}&\text{ if }m=6\\
\oplus_{1\le i\le 10,i\ne 5,6,10}W_{\pm,i}&
     \text{ if }m=4\end{array}\right\},\\
&&\mathfrak{K}_{\pm,\mathfrak{R}}=W_{\pm,1}\oplus
W_{\pm,2}\oplus W_{\pm,3}\,.
\end{eqnarray*}
These are inequivalent
$\mathcal{U}_\pm^\star$ modules except for the isomorphisms:
$$
W_{\pm,1}\approx
W_{\pm,4}\approx\mathbb{R}\quad\text{and}\quad W_{\pm,2}\approx
W_{\pm,5}\approx S_{0,\mp,J_\pm}^2\,.
$$
We have $W_{\pm,8}\approx
S_{\pm,J_\pm}^2$ and $W_{\pm,9}\approx\Lambda_{\pm,J_\pm}^2$ as
$\mathcal{U}_\pm^\star$ modules. None of the modules $W_{\pm,i}$ is
isomorphic either to
$\chi$ or to
$\Lambda_{0,\mp,J_\pm}^2$ as $\mathcal{U}_\pm^\star$ modules.
\end{theorem}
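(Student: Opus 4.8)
The plan is to obtain the decomposition in two stages: first decompose $\mathfrak{R}$ as a module over the full orthogonal group $\mathcal{O}$, and then refine each $\mathcal{O}$-irreducible summand as a $\mathcal{U}_\pm^\star$ module by means of the structure $J_\pm$. Over $\mathcal{O}$ one has the classical Riemann curvature decomposition into scalar curvature, trace-free Ricci, and conformal Weyl pieces; the relevant projections are the independent Ricci-type contractions permitted by the symmetries of Equation~(\ref{eqn-1.b}) and of Equation~(\ref{eqn-1.d}). This first stage is standard and I would simply quote it.

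The substance lies in the second stage. The basic $\mathcal{U}_\pm^\star$-equivariant operator on $\mathfrak{R}$ is the involution $c\colon A\mapsto A(J_\pm\cdot,J_\pm\cdot,J_\pm\cdot,J_\pm\cdot)$, which preserves all the symmetries of Equation~(\ref{eqn-1.b}) and of Equation~(\ref{eqn-1.d}); its $\pm1$-eigenspaces give a first $\mathcal{U}_\pm^\star$-invariant splitting. Together with the Gray symmetrizer of Equation~(\ref{eqn-1.f}) and the operators that insert $J_\pm$ into pairs of slots subject to those symmetries, one obtains a commuting family whose simultaneous eigenspaces, intersected with the $\mathcal{O}$-summands of the first stage, produce the candidate list of summands $W_{\pm,i}$.

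Next I would pin down the low-rank summands by exhibiting explicit $\mathcal{U}_\pm^\star$-equivariant contraction maps. The ordinary scalar curvature and the $J_\pm$-twisted scalar curvature obtained by contracting against $\Omega_\pm$ furnish the two trivial copies $W_{\pm,1}\approx W_{\pm,4}\approx\mathbb{R}$, while the two independent trace maps valued in $S_{0,\mp,J_\pm}^2$ furnish $W_{\pm,2}\approx W_{\pm,5}\approx S_{0,\mp,J_\pm}^2$; the identifications $W_{\pm,8}\approx S_{\pm,J_\pm}^2$ and $W_{\pm,9}\approx\Lambda_{\pm,J_\pm}^2$ arise from the analogous contractions landing in the remaining $J_\pm$-eigenspaces of $\otimes^2V^*$ recorded in Equation~(\ref{eqn-2.a}). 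The K\"ahler--Riemann subspace $\mathfrak{K}_{\pm,\mathfrak{R}}$ is by definition the subspace of $\mathfrak{R}$ cut out by Equation~(\ref{eqn-1.e}), and a direct check on the candidate summands identifies it with $W_{\pm,1}\oplus W_{\pm,2}\oplus W_{\pm,3}$.

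The hard part is the irreducibility of each $W_{\pm,i}$ together with the assertions that none is isomorphic to $\chi$ or to $\Lambda_{0,\mp,J_\pm}^2$. Here I would lean on the rigidity of Lemma~\ref{lem-2.1}: once the dimension of each candidate summand is computed in each signature, parts (3)--(5) force inequivalent irreducibles to be orthogonal and fix the multiplicities, and part (6) lets me split off any summand that is produced only as a quotient. To prove irreducibility itself I would complexify and reduce to the positive-definite Hermitian case, where the stated decomposition is the theorem of \cite{M73,S73,TV81}; passing to the $\mathbb{Z}_2$-extension $\mathcal{U}_\pm^\star$ is exactly what makes the complexified modules behave uniformly, so that by \cite{BGN11} the summand structure is independent of signature and the para-complex case runs in parallel with the complex one. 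Finally the dimension-dependent truncations -- dropping $W_{\pm,6}$ when $m=6$ and $W_{\pm,5},W_{\pm,6},W_{\pm,10}$ when $m=4$ -- follow by checking precisely when the corresponding eigenspaces collapse or the defining contractions become dependent in low dimension.
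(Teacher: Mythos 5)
The paper gives no proof of Theorem~\ref{thm-2.2} at all: it is imported as a known result, established for positive-definite almost Hermitian structures in \cite{M73,S73,TV81} and extended to indefinite signature and to the para-Hermitian case in \cite{BGN11}. Your proposal outlines the Tricerri--Vanhecke strategy, but at every load-bearing step (irreducibility, inequivalence of the summands, and the passage to arbitrary signature and para-complex structures via $\mathcal{U}_\pm^\star$) it invokes precisely these same references, so it is essentially the same approach as the paper's: both treatments rest on the cited literature rather than on a self-contained argument.
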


The precise nature of the modules $W_{\pm,i}$ for $i=3,6,7,10$
is not relevant and we refer to \cite{TV81} in the Riemannian
setting and to  \cite{BGN11} in the general setting for their precise
definition.

\subsection{The Higa decomposition}\label{sect-2.3}
We refer to \cite{H93,H94} for the proof of:
\begin{theorem}\label{thm-2.3}
There is an orthogonal direct sum decomposition
$\mathfrak{W}=\mathfrak{R}\oplus L$ where $L\approx\Lambda^2$ as an
$\mathcal{O}$ module.
\end{theorem}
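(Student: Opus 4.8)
The plan is to realize the complement $L$ as the image of a natural $\mathcal{O}$-equivariant splitting of the linear map that measures the failure of the pair symmetry~(\ref{eqn-1.d}).

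I would begin by introducing the $\mathcal{O}$-equivariant map $\alpha\colon\mathfrak{W}\to\Lambda^2$ given by $\alpha(R):=\rho_a$, the alternating part of the Ricci tensor of $R$. As $\rho_a$ is obtained from $R$ by a metric contraction followed by antisymmetrization, $\alpha$ is natural and hence commutes with the $\mathcal{O}$ action. Its kernel is computed at once: if $R\in\mathfrak{W}$ has $\rho_a=0$, then the third line of~(\ref{eqn-1.b}) collapses to~(\ref{eqn-1.d}) and $R\in\mathfrak{R}$; conversely every $R\in\mathfrak{R}$ satisfies~(\ref{eqn-1.d}), which forces $\rho_a\langle\cdot,\cdot\rangle=0$ and hence $\rho_a=0$. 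Thus $\ker\alpha=\mathfrak{R}$, so $\mathfrak{W}/\mathfrak{R}$ embeds $\mathcal{O}$-equivariantly into $\Lambda^2$.

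The heart of the matter is the surjectivity of $\alpha$, which I would establish by writing down an explicit $\mathcal{O}$-equivariant section. For $\Phi\in\Lambda^2$ I set
\[
S(\Phi)(x,y,z,w):=2\Phi(x,y)\langle z,w\rangle+\Phi(x,z)\langle y,w\rangle-\Phi(y,z)\langle x,w\rangle-\Phi(x,w)\langle y,z\rangle+\Phi(y,w)\langle x,z\rangle.
\]
This is manifestly antisymmetric in $(x,y)$ and is assembled naturally from $\Phi$ and $\langle\cdot,\cdot\rangle$, so $S$ is $\mathcal{O}$-equivariant. The three relative coefficients are pinned down by two requirements: the cyclic sum over $(x,y,z)$ must vanish, yielding the first Bianchi identity, and the part of $S(\Phi)$ symmetric in $(z,w)$ must reduce to a multiple of $\Phi(x,y)\langle z,w\rangle$, as demanded by the third line of~(\ref{eqn-1.b}). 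A short contraction then gives $\rho(S(\Phi))=-m\,\Phi$, which is already alternating, so $\alpha(S(\Phi))=-m\,\Phi$; in particular $S(\Phi)\in\mathfrak{W}$ and $\alpha$ is onto. I expect this bookkeeping to be the main obstacle, because the defining relation in~(\ref{eqn-1.b}) couples $R$ to its own contraction, so the Bianchi identity, the $(z,w)$-symmetric constraint, and the resulting self-referential Ricci identity must all be made to hold simultaneously.

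Finally I would upgrade the resulting short exact sequence $0\to\mathfrak{R}\to\mathfrak{W}\xrightarrow{\alpha}\Lambda^2\to0$ to an orthogonal splitting. Since the extension of $\langle\cdot,\cdot\rangle$ to $\otimes^4V^*$ is $\mathcal{O}$-invariant and, by the reasoning underlying Lemma~\ref{lem-2.1}(1) applied to $\mathcal{O}$, is nondegenerate on the $\mathcal{O}$-module $\mathfrak{R}$, the subspace $L:=\mathfrak{R}^\perp\cap\mathfrak{W}$ is an $\mathcal{O}$-submodule satisfying $\mathfrak{W}=\mathfrak{R}\oplus L$ as an orthogonal direct sum. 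The map $\alpha$ then restricts to an $\mathcal{O}$-isomorphism $L\to\Lambda^2$, giving $L\approx\Lambda^2$ and completing the argument.
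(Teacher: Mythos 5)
Your proof is correct, but it cannot be ``essentially the same as the paper's'' for a simple reason: the paper offers no proof of Theorem~\ref{thm-2.3} at all, referring instead to Higa \cite{H93,H94}. What you have done is reconstruct that cited argument, and your computations check out. With the paper's convention $\rho(x,y)=\Tr\{z\rightarrow\mathcal{R}(z,x)y\}$, contracting your section gives $\rho(S(\Phi))=(-2+1-m+0+1)\Phi=-m\Phi$, which is already alternating; hence $S(\Phi)(x,y,z,w)+S(\Phi)(x,y,w,z)=4\Phi(x,y)\langle z,w\rangle$ coincides with $-\frac4m\rho_a(S(\Phi))(x,y)\langle z,w\rangle$, so the self-referential third identity of~(\ref{eqn-1.b}) closes up exactly as you hoped, and the cyclic Bianchi sum of $S(\Phi)$ vanishes term by term. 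Note that your $S$ is precisely the map $\Xi$ which the paper introduces later, in the proof of Theorem~\ref{thm-1.3} in Section~\ref{sect-4}, where it is asserted (again on the authority of \cite{H93,H94,TV81}) that $L$ is the image of $\Xi$. So your argument supplies the verification that the paper leaves to the literature, in a form compatible with how the paper later uses the result; this is a genuine gain in self-containedness.

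One step needs repair. You invoke ``the reasoning underlying Lemma~\ref{lem-2.1}(1) applied to $\mathcal{O}$'', but Lemma~\ref{lem-2.1} is stated only for $\mathcal{U}_\pm^\star$ modules, and the paper explicitly warns that this kind of statement fails for some groups (it fails for $\mathcal{U}_+$), so in the indefinite-signature setting a transfer to a different group should not be waved through without proof. Fortunately no transfer is needed, because the inclusion of groups goes the right way: $\mathcal{U}_\pm^\star\subset\mathcal{O}$, so the $\mathcal{O}$-submodule $\mathfrak{R}\subset\otimes^4V^*$ is in particular a $\mathcal{U}_\pm^\star$-submodule, and Lemma~\ref{lem-2.1}(1) applies verbatim to give nondegeneracy of the induced form on $\mathfrak{R}$. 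Standard linear algebra then yields $\mathfrak{W}=\mathfrak{R}\oplus(\mathfrak{R}^\perp\cap\mathfrak{W})$; this complement is $\mathcal{O}$-invariant since $\mathfrak{R}$, $\mathfrak{W}$, and the form all are; and your kernel computation plus rank--nullity identifies it $\mathcal{O}$-equivariantly with $\Lambda^2$ via $\alpha$. With that substitution your proof is complete.
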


Decomposing
$\Lambda^2=\Omega_\pm\cdot\mathbb{R}\oplus\Lambda_{0,\mp,J_\pm}^2
\oplus\Lambda_{\pm,J_\pm}^2$ as a $\mathcal{U}_\pm^\star$ module and
then applying Theorem~\ref{thm-2.2} and Theorem~\ref{thm-2.3} yields:

\begin{theorem}\label{thm-2.4}
Let $(V,\langle\cdot,\cdot\rangle,J_\pm)$ be a para/pseudo-Hermitian vector
space.  We  have an orthogonal direct sum decomposition of
$\mathcal{U}_\pm^\star$ modules:
$$\mathfrak{W}=
\left\{\begin{array}{ll}
\oplus_{1\le i\le 13}W_{\pm,i}&\text{ if }m\ge 8\\
\oplus_{1\le i\le 13,i\ne 6}W_{\pm,i}&\text{ if }m=6\\
\oplus_{1\le i\le 13,i\ne 5,6,10}W_{\pm,i}&\text{ if
}m=4\end{array}\right\}.$$ We have  $W_{\pm,11}\approx\chi$,
$W_{\pm,12}\approx\Lambda_{0,\mp,J_\pm}^2$, and
$W_{\pm,13}\approx\Lambda_{\pm,J_\pm}^2$ as $\mathcal{U}_\pm^\star$ modules.
These are non-trivial inequivalent $\mathcal{U}_\pm^\star$ modules except for
the isomorphisms:
$$
W_{\pm,1}\approx W_{\pm,4}\approx\mathbb{R},\quad
W_{\pm,2}\approx W_{\pm,5}\approx S_{0,\mp,J_\pm}^2,\quad
W_{\pm,9}\approx W_{\pm,13}\approx\Lambda_{\pm,J_\pm}^2\,.
$$
\end{theorem}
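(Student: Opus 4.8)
The plan is to read the decomposition off the two ingredients already assembled, using the Higa splitting of Theorem~\ref{thm-2.3} as the coarse layer and refining each of its two summands into $\mathcal{U}_\pm^\star$ irreducibles. The first point to record is that although Theorem~\ref{thm-2.3} is stated for the full orthogonal group, the inclusion $\mathcal{U}_\pm^\star\subset\mathcal{O}$ forces the splitting $\mathfrak{W}=\mathfrak{R}\oplus L$ to be a decomposition of $\mathcal{U}_\pm^\star$ modules as well, and the $\mathcal{O}$-module isomorphism $L\approx\Lambda^2$ is a fortiori an isomorphism of $\mathcal{U}_\pm^\star$ modules. Hence it is enough to decompose $\mathfrak{R}$ and $\Lambda^2$ separately as $\mathcal{U}_\pm^\star$ modules and then to concatenate.

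For the summand $\mathfrak{R}$ I would simply quote Theorem~\ref{thm-2.2}, which provides the modules $W_{\pm,i}$ in the index range dictated by $m$, together with the isomorphism types $W_{\pm,1}\approx W_{\pm,4}\approx\mathbb{R}$, $W_{\pm,2}\approx W_{\pm,5}\approx S_{0,\mp,J_\pm}^2$, $W_{\pm,8}\approx S_{\pm,J_\pm}^2$, and $W_{\pm,9}\approx\Lambda_{\pm,J_\pm}^2$. For the summand $L\approx\Lambda^2$ I would invoke the $\mathcal{U}_\pm^\star$-module decomposition $\Lambda^2=\Omega_\pm\cdot\mathbb{R}\oplus\Lambda_{0,\mp,J_\pm}^2\oplus\Lambda_{\pm,J_\pm}^2$ recorded in Section~\ref{sect-2.2}, whose three pieces are irreducible and inequivalent by the decomposition~(\ref{eqn-2.a}) of $\otimes^2V^*$. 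The line $\Omega_\pm\cdot\mathbb{R}$ transforms by the character $\chi$, since $T^*\Omega_\pm=\chi(T)\Omega_\pm$, and this is the piece I would name $W_{\pm,11}$; the remaining two pieces supply $W_{\pm,12}\approx\Lambda_{0,\mp,J_\pm}^2$ and $W_{\pm,13}\approx\Lambda_{\pm,J_\pm}^2$. Adjoining these three to the $W_{\pm,i}\subset\mathfrak{R}$ produces the stated decomposition of $\mathfrak{W}$, with the excluded indices in each dimension regime inherited verbatim from Theorem~\ref{thm-2.2}.

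It then remains to settle equivalences and inequivalences. The coincidences among the $\mathfrak{R}$-summands are already supplied by Theorem~\ref{thm-2.2}; the only new coincidence introduced by the Higa layer is $W_{\pm,13}\approx\Lambda_{\pm,J_\pm}^2\approx W_{\pm,9}$. That $W_{\pm,11}$ and $W_{\pm,12}$ are genuinely new isomorphism types follows from the closing assertion of Theorem~\ref{thm-2.2} that no $W_{\pm,i}\subset\mathfrak{R}$ is isomorphic to $\chi$ or to $\Lambda_{0,\mp,J_\pm}^2$, so both are inequivalent to every summand of $\mathfrak{R}$; that $\chi$, $\Lambda_{0,\mp,J_\pm}^2$, and $\Lambda_{\pm,J_\pm}^2$ are pairwise inequivalent is again the inequivalence recorded in~(\ref{eqn-2.a}). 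Non-triviality of all three pieces of $L$, including in the delicate case $m=4$, follows from Lemma~\ref{lem-2.1} together with the uniform dimension counts $\dim\Lambda_{\pm,J_\pm}^2=\bar m(\bar m-1)$ and $\dim\Lambda_{0,\mp,J_\pm}^2=\bar m^2-1$, which are positive once $\bar m\ge2$.

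The mathematics here is entirely borrowed from Theorems~\ref{thm-2.2} and~\ref{thm-2.3}, so the only real work is bookkeeping: one must check that transporting the $\mathcal{U}_\pm^\star$ action through the abstract $\mathcal{O}$-isomorphism $L\approx\Lambda^2$ does not silently permute isotypic components, and that the index conventions line up across $m\ge8$, $m=6$, and $m=4$ so that the excluded indices $5,6,10$ are carried correctly. Both are immediate from the multiplicity-independence and uniqueness clauses of Lemma~\ref{lem-2.1}, so I expect no genuine obstacle; this is the step I would flag as requiring care rather than ideas.
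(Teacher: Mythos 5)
Your proposal is correct and takes exactly the paper's own route: the paper obtains Theorem~\ref{thm-2.4} precisely by decomposing $\Lambda^2=\Omega_\pm\cdot\mathbb{R}\oplus\Lambda_{0,\mp,J_\pm}^2\oplus\Lambda_{\pm,J_\pm}^2$ as a $\mathcal{U}_\pm^\star$ module and combining this with the Higa splitting of Theorem~\ref{thm-2.3} and the Tricerri--Vanhecke decomposition of Theorem~\ref{thm-2.2}. The bookkeeping you flag (that the $\mathcal{O}$-module splitting restricts to a $\mathcal{U}_\pm^\star$-module splitting, and that the inequivalences follow from Equation~(\ref{eqn-2.a}) together with the closing assertion of Theorem~\ref{thm-2.2}) is exactly what the paper leaves implicit.
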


\subsection{The Gray-Hervella decomposition}\label{sect-2.4}
 We follow \cite{BGGH11,GH80}. 
We assume $J_\pm$ is integrable. The covariant derivative
$\nabla^g\Omega_\pm$ has the symmetries:
\begin{equation}\label{eqn-2.b}
\begin{array}{l}
(\nabla^g\Omega_\pm)(x,y;z)=-(\nabla^g\Omega_\pm)(y,x;z)
=\pm(\nabla^g\Omega_\pm)(J_\pm x,J_\pm y;z)
\\
\phantom{\nabla^g\Omega_\pm(x,y;z)...}=\mp
(\nabla^g\Omega_\pm)(x,J_\pm y;J_\pm z)\,.\vphantom{\vrule  height 12pt}
\end{array}
\end{equation}
Let $(V,\langle\cdot,\cdot\rangle,J_\pm)$ be a para/pseudo-Hermitian vector
space. Let $\varepsilon_{ij}:=\langle e_i,e_j\rangle$ where
$\{e_i\}$ is a basis for $V$. Let $\phi\in V^*$.  Let $H\in\otimes^3V^*$. Let
$U_{\pm}$ be the space of tensors satisfying Equation~(\ref{eqn-2.b}). Set
\begin{eqnarray*}
&&\sigma_\pm(\phi)(x,y;z):=\phi({J_\pm}
x)\langle y,z\rangle-\phi({J_\pm} y)\langle x,z\rangle
+\phi(x)\langle{J_\pm} y,z\rangle-\phi(y)\langle{J_\pm} x,z\rangle,\\
&&(\tau_1H)(x):=\varepsilon^{ij}H(x,e_i;e_j)\,.
\end{eqnarray*}
The map $\tau_1$ appears in elliptic operator theory. Let $\delta$ be
coderivative -- $\delta$ is the formal adjoint of the exterior derivative
$d$. If
$\Phi$ is a smooth
$2$-form, then
\begin{equation}\label{eqn-2.c}
\delta\Phi=\tau_1\nabla^g\Phi\,.
\end{equation}

One has (see, for example, the discussion
in
\cite{BGGH11}) that:
\begin{equation}\label{eqn-2.d}
\tau_1\sigma_\pm=(m-2)J_\pm^\star\,.
\end{equation}
Thus $\operatorname{Range}(\sigma_\pm)\perp\ker(\tau_1)$ and these are
$\mathcal{U}_\pm^\star$ modules. We therefore set:
$$U_{\pm,3}:=U_\pm\cap\ker(\tau_1)\quad\text{and}\quad
U_{\pm,4}:=\operatorname{Range}(\sigma_\pm)\,.
$$
The following result follows from a more general result of \cite{GH80} in
the Hermitian setting; we refer to \cite{BGGH11} for the extension to the
pseudo-Hermitian and the para-Hermitian settings:

\begin{theorem}\label{thm-2.5}
Let $(V,\langle\cdot,\cdot\rangle,J_\pm)$ be a para/pseudo-Hermitian vector
space. We have a direct sum orthogonal decomposition of $U_\pm$ into
non-trivial irreducible and inequivalent
$\mathcal{U}_\pm^\star$ modules
 in the form:
\begin{eqnarray*}
&&U_\pm=\left\{\begin{array}{ll}
U_{\pm,3}\oplus U_{\pm,4}&\text{ if }m\ge6\\
U_{\pm,4}&\text{ if }m=4\end{array}\right\}.
\end{eqnarray*}
\end{theorem}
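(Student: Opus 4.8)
The plan is to organize everything around the two $\mathcal{U}_\pm^\star$-natural maps $\sigma_\pm$ and $\tau_1$, with Equation~(\ref{eqn-2.d}) as the one piece of real input and Lemma~\ref{lem-2.1} converting linear algebra into statements about modules. First I would verify that $\sigma_\pm(V^*)\subset U_\pm$, i.e.\ that $\sigma_\pm(\phi)$ satisfies each symmetry of Equation~(\ref{eqn-2.b}); this is a direct substitution into the definition of $\sigma_\pm$ and is routine. Since $m\ge4>2$ and $J_\pm^\star$ is an invertible endomorphism of $V^*$ (indeed $(J_\pm^\star)^2=\pm\id$), Equation~(\ref{eqn-2.d}) shows that $\tau_1\sigma_\pm=(m-2)J_\pm^\star$ is an isomorphism of $V^*$. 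Hence $\sigma_\pm$ is injective; as $\sigma_\pm$ intertwines the $\mathcal{U}_\pm^\star$-actions up to the character $\chi$, the range $U_{\pm,4}=\operatorname{Range}(\sigma_\pm)$ is an irreducible $\mathcal{U}_\pm^\star$ submodule with $\dim U_{\pm,4}=\dim V^*=m$, and $\tau_1$ restricts to an isomorphism $U_{\pm,4}\to V^*$; in particular $U_{\pm,4}\cap\ker(\tau_1)=0$.

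Second, I would produce the decomposition $U_\pm=U_{\pm,3}\oplus U_{\pm,4}$. Given $H\in U_\pm$, put $\phi:=(m-2)^{-1}(J_\pm^\star)^{-1}\tau_1H$; then Equation~(\ref{eqn-2.d}) gives $\tau_1(\sigma_\pm\phi)=\tau_1H$, so $H-\sigma_\pm\phi$ lies in $U_\pm\cap\ker(\tau_1)=U_{\pm,3}$ and $U_\pm=U_{\pm,3}+U_{\pm,4}$. The sum is direct because $\tau_1$ is injective on $U_{\pm,4}$, and it is orthogonal by the relation $\operatorname{Range}(\sigma_\pm)\perp\ker(\tau_1)$ recorded before the statement; both summands are $\mathcal{U}_\pm^\star$ modules, so this is an orthogonal decomposition into modules valid for every $m\ge4$.

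The case $m=4$ is the crux, and I would settle it by a dimension count. By the second identity of Equation~(\ref{eqn-2.b}) the first two arguments of any $H\in U_\pm$ lie in $\Lambda_{\pm,J_\pm}^2$, so $U_\pm\subset\Lambda_{\pm,J_\pm}^2\otimes V^*$, a space of dimension $2\cdot 4=8$ when $\bar m=2$; imposing the remaining identity of Equation~(\ref{eqn-2.b}) is then a finite linear system whose solution space one computes to have dimension exactly $4$ (the analogous computation handles the para case). Since $\dim U_{\pm,4}=m=4$ as well, the decomposition of the previous paragraph forces $U_{\pm,3}=0$ and $U_\pm=U_{\pm,4}$, which is irreducible. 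This vanishing of $U_{\pm,3}$ is exactly the low-dimensional phenomenon singling out dimension four.

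For $m\ge6$ what remains is that $U_{\pm,3}$ is a non-trivial irreducible $\mathcal{U}_\pm^\star$ module inequivalent to $U_{\pm,4}$, and here I would invoke the Gray--Hervella classification in the form of \cite{GH80,BGGH11}, in which $U_{\pm,3}$ is identified as one of the standard irreducible factors; inequivalence with $U_{\pm,4}$ together with Lemma~\ref{lem-2.1}~(3) re-confirms orthogonality. I expect this irreducibility statement for $m\ge6$ to be the genuine obstacle, since once Equation~(\ref{eqn-2.d}) is available the splitting is soft; by contrast the only new input at $m=4$ is the dimension count, which reduces to a concrete finite computation.
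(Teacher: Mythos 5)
Your proposal is correct, but it takes a genuinely different route from the paper: the paper does not prove Theorem~\ref{thm-2.5} at all, but simply derives it from a more general result of \cite{GH80} in the Hermitian setting and cites \cite{BGGH11} for the extension to the pseudo-Hermitian and para-Hermitian cases. Your argument, by contrast, is nearly self-contained. The soft part is sound: $\operatorname{Range}(\sigma_\pm)\subset U_\pm$ is a routine verification, Equation~(\ref{eqn-2.d}) makes $\sigma_\pm$ injective since $m>2$ and $(J_\pm^\star)^2=\pm\id$, and subtracting $\sigma_\pm\phi$ with $\phi=(m-2)^{-1}(J_\pm^\star)^{-1}\tau_1H$ gives the orthogonal splitting $U_\pm=U_{\pm,3}\oplus U_{\pm,4}$ for every $m\ge4$. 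Your $m=4$ dimension count is also right: $\dim\Lambda_{\pm,J_\pm}^2=\bar m(\bar m-1)=2$, and writing $H=\alpha\otimes\phi_1+\beta\otimes\phi_2$ for a basis $\{\alpha,\beta\}$ of $\Lambda_{\pm,J_\pm}^2$, the remaining symmetry of Equation~(\ref{eqn-2.b}) forces $\phi_1$ to be a fixed multiple of $J_\pm^\star\phi_2$ while leaving $\phi_2$ free, so $\dim U_\pm=4=\dim U_{\pm,4}$ and $U_{\pm,3}=0$. Two caveats: first, the irreducibility of $U_{\pm,4}\approx V^*\otimes\chi$, which you assert rather than prove, genuinely requires the $\mathbb{Z}_2$-extension -- in the para-Hermitian case $V^*$ splits under $\mathcal{U}_+$ into the two $J_+^\star$-eigenspaces, and only $\mathcal{U}_+^\star$, which interchanges them, restores irreducibility; this is exactly why the paper works with $\mathcal{U}_\pm^\star$ throughout, and a sentence to this effect belongs in your proof. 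Second, for $m\ge6$ you fall back on the same citation the paper uses for everything, so your added value is concentrated precisely where this paper needs the theorem: the $m=4$ statement $U_\pm=U_{\pm,4}$, which drives the proof of Theorem~\ref{thm-1.2} in Section~\ref{sect-3}, becomes an explicit finite linear-algebra computation rather than an appeal to the Gray--Hervella classification, whereas the paper's pure citation buys brevity and the full $m\ge6$ irreducibility statement at no extra cost.
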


\section{The proof of Theorem \ref{thm-1.2}}\label{sect-3}

We adopt the notation of Theorem~\ref{thm-2.5}. We begin by establishing the
following result which is of interest in its own right.

\begin{theorem}\label{thm-3.1} Let $(M,g,J_\pm)$ be a para/pseudo-Hermitian
manifold.
\begin{enumerate}
\item The following assertions are equivalent:
\begin{enumerate}
\smallbreak\item $\nabla^g\Omega_\pm\in U_{\pm,4}$ for all points of $M$.
\smallbreak\item There exists $\nabla$ so $(M,g,J_\pm,\nabla)$ is a
K\"ahler--Weyl structure.
\end{enumerate}
\smallbreak\item If $(M,g,J_\pm,\nabla)$ is a K\"ahler--Weyl structure, then
$\phi=\pm\frac1{m-2}J_\pm^*\delta\Omega_\pm$.
\end{enumerate}
\end{theorem}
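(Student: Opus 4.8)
The plan is to reduce the whole theorem to a single pointwise identity: $(M,g,J_\pm,\nabla)$ is K\"ahler--Weyl if and only if $\nabla^g\Omega_\pm=\sigma_\pm(\phi)$. To produce it I would substitute the Weyl formula of Equation~(\ref{eqn-1.a}) into $(\nabla_xJ_\pm)y=\nabla_x(J_\pm y)-J_\pm(\nabla_xy)$. The two copies of $\phi(x)J_\pm y$ cancel, leaving
\[(\nabla_xJ_\pm)y=(\nabla^g_xJ_\pm)y+\phi(J_\pm y)x-g(x,J_\pm y)\phi^\star-\phi(y)J_\pm x+g(x,y)J_\pm\phi^\star,\]
so that $\nabla J_\pm=0$ is equivalent to an explicit formula for $\nabla^g J_\pm$. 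Feeding this into $(\nabla^g_z\Omega_\pm)(x,y)=g(x,(\nabla^g_zJ_\pm)y)$ and simplifying with the skew-adjointness $g(J_\pm a,b)=-g(a,J_\pm b)$, which follows from $J_\pm^*g=\mp g$, together with $g(\phi^\star,\cdot)=\phi$, I expect the four resulting terms to reassemble exactly into $\sigma_\pm(\phi)(x,y;z)$. This yields the stated equivalence, where $\nabla^g\Omega_\pm$ is read in the convention $(\nabla^g\Omega_\pm)(x,y;z)=(\nabla^g_z\Omega_\pm)(x,y)$ of Equation~(\ref{eqn-2.b}).

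Granting the identity, part~(1) is almost immediate. For (b)$\Rightarrow$(a): any Weyl connection is of the form~(\ref{eqn-1.a}) for a smooth $1$-form $\phi$, so $\nabla J_\pm=0$ forces $\nabla^g\Omega_\pm=\sigma_\pm(\phi)\in\operatorname{Range}(\sigma_\pm)=U_{\pm,4}$ at every point. For (a)$\Rightarrow$(b): the tensor $\nabla^g\Omega_\pm$ always lies in $U_\pm$ since it satisfies Equation~(\ref{eqn-2.b}); assuming it lies in the summand $U_{\pm,4}=\operatorname{Range}(\sigma_\pm)$, I may solve $\nabla^g\Omega_\pm=\sigma_\pm(\phi)$, take $\nabla$ to be the Weyl connection~(\ref{eqn-1.a}) determined by this $\phi$, and read the identity backwards to obtain $\nabla J_\pm=0$. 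The only delicate point is that $\phi$ be a genuine smooth $1$-form: here I invoke Equation~(\ref{eqn-2.d}), which shows $\tau_1\sigma_\pm=(m-2)J_\pm^\star$ is injective whenever $m\ne2$, so $\sigma_\pm$ is injective and $\phi$ is uniquely determined, with smoothness visible from the explicit left inverse $\phi=(m-2)^{-1}(J_\pm^\star)^{-1}\tau_1\nabla^g\Omega_\pm$.

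Part~(2) is then obtained by applying $\tau_1$ to $\nabla^g\Omega_\pm=\sigma_\pm(\phi)$. The left-hand side is $\delta\Omega_\pm$ by Equation~(\ref{eqn-2.c}), and the right-hand side is $(m-2)J_\pm^\star\phi$ by Equation~(\ref{eqn-2.d}); since $(J_\pm^\star\phi)(x)=\phi(J_\pm x)$ this reads $\delta\Omega_\pm=(m-2)\,\phi\circ J_\pm$. Applying the pullback $J_\pm^*$ and using $J_\pm^2=\pm\id$ gives $J_\pm^*\delta\Omega_\pm=\pm(m-2)\phi$, which rearranges to $\phi=\pm\frac1{m-2}J_\pm^*\delta\Omega_\pm$.

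The one step demanding real care is the sign-bookkeeping of the first paragraph: matching the four terms of $\nabla^g_z\Omega_\pm$ against the four terms of $\sigma_\pm$ uses the skew-adjointness of $J_\pm$ and $J_\pm^2=\pm\id$ repeatedly, and a single sign error would destroy the clean identification; everything afterwards is formal manipulation of the operators $\tau_1,\sigma_\pm,\delta$ assembled in Section~\ref{sect-2}. Finally I would point out that in dimension $4$ Theorem~\ref{thm-2.5} gives $U_\pm=U_{\pm,4}$, so hypothesis (a) holds automatically; part~(1) then produces the K\"ahler--Weyl connection of Theorem~\ref{thm-1.2} and part~(2) pins down its Lee form, which is exactly how Theorem~\ref{thm-3.1} specializes to the $m=4$ statement.
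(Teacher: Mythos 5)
Your proposal is correct and takes essentially the same approach as the paper: both rest on the key identity $(\nabla^g\Omega_\pm)(x,y;z)=(\sigma_\pm\phi)(x,y;z)+g(x,(\nabla_zJ_\pm)y)$ (the paper's Equation~(\ref{eqn-3.a})), which you derive by computing $\nabla J_\pm-\nabla^gJ_\pm$ from Equation~(\ref{eqn-1.a}) and pairing with the metric, whereas the paper computes $\nabla\Omega_\pm$ in two ways; the two calculations are equivalent. The rest matches the paper step for step: smoothness of $\phi$ via the left inverse coming from Equation~(\ref{eqn-2.d}), and Assertion~(2) by applying $\tau_1$ together with Equations~(\ref{eqn-2.c}) and~(\ref{eqn-2.d}).
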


\begin{remark}\rm
By Assertion (2) and by Equation~(\ref{eqn-1.a}), the connection in Assertion
(1b) is uniquely determined by $(M,g,J_\pm)$.
\end{remark}

\begin{proof}
We compute directly that:
\begin{eqnarray*}
&&(\nabla\Omega_\pm)(x,y;z)=zg(x,J_\pm y)-g(\nabla_zx,J_\pm y)
    -g(x,J_\pm\nabla_zy)\\
&&\quad=zg(x,J_\pm y)-g(\nabla_zx,J_\pm y)-g(x,\nabla_zJ_\pm y)
    +g(x,(\nabla_zJ_\pm)y)\\
&&\quad=(\nabla_zg)(x,J_\pm y)+g(x,(\nabla_zJ_\pm)y)\\
&&\quad=-2\phi(z)g(x,J_\pm y)+g(x,(\nabla_zJ_\pm)y)\,.
\end{eqnarray*}
We use Equation~(\ref{eqn-1.a}) and the definition of $\sigma_\pm$ to
compute that:
\begin{eqnarray*}
&&(\nabla\Omega_\pm)(x,y;z)=zg(x,J_\pm y)-g(\nabla_z^gx,J_\pm y)
     -g(x,J_\pm\nabla_z^gy)\\
&&\qquad-\phi(z)g(x,J_\pm y)-\phi(x)g(z,J_\pm y)+g(x,z)g(\phi^*,J_\pm y)\\
&&\qquad-\phi(z)g(x,J_\pm y)-\phi(y)g(x,J_\pm z)+g(y,z)g(x,J_\pm\phi^*)\\
&&\quad=(\nabla^g\Omega)(x,y;z)-2\phi(z)g(x,J_\pm
   y)-\sigma_\pm(\phi)(x,y;z)\,.
\end{eqnarray*}
\medbreak\noindent
This leads to the relation:
\begin{equation}\label{eqn-3.a}
(\nabla^g\Omega_\pm)(x,y;z)=(\sigma_\pm\phi)(x,y;z)+g(x,(\nabla_zJ_\pm)y)\,.
\end{equation}

Suppose that there exists a torsion free connection $\nabla$ so that $\nabla
g=-2\phi\otimes\phi$ and so that $\nabla J_\pm=0$. By
Equation~(\ref{eqn-3.a}),
$$\nabla^g\Omega_\pm\in\operatorname{Range}(\sigma_\pm)=U_{\pm,4}\,.$$
Consequently, Assertion (1b) implies Assertion (1a). Conversely, suppose that
there exists a $1$-form $\phi$ so
$\nabla^g\Omega_\pm=\sigma_\pm(\phi)$. By
Equation~(\ref{eqn-2.d}),
$$\phi=\textstyle\pm\frac1{m-2}J_\pm^*\tau_1\nabla^g\Omega_\pm\,.$$ 
Consequently, $\phi$ is smooth. Motivated by Equation (\ref{eqn-1.a}), we
define a connection
$\nabla$ by setting:
$$\nabla_xy:=\nabla_x^gy+\phi(x)y+\phi(y)x-g(x,y)\phi^\star\,.$$
Since
$\nabla_xy-\nabla_yx=\nabla_x^gy-\nabla_y^gx=[x,y]$, $\nabla$ is torsion free.
Furthermore,
\begin{eqnarray*}
(\nabla_xg)(y,z)&=&xg(y,z)-g(\nabla_xy,z)-g(y,\nabla_xz)\\
&=&xg(y,z)-g(\nabla_x^gy,z)-g(y,\nabla_x^gz)\\
&-&\phi(x)g(y,z)-\phi(y)g(x,z)+g(x,y)\phi(z)\\
&-&\phi(x)g(z,y)-\phi(z)g(x,y)+g(x,z)\phi(y)\\
&=&-2\phi(x)g(y,z)\,.
\end{eqnarray*}
This shows $\nabla g=-2\phi\otimes g$ so $(M,g,\nabla)$
is a Weyl structure. We apply Equation~(\ref{eqn-3.a}) to conclude $\nabla
J_\pm=0$ and thus $(M,g,J_\pm,\nabla)$ is a K\"ahler--Weyl structure. This
shows that Assertion (1a) implies Assertion (1b) and completes the proof of
Assertion (1).

If $(M,g,J_\pm,\nabla)$ is a K\"ahler--Weyl structure, then
$\nabla^g\Omega_\pm=\sigma_\pm\phi$ by
Equation~(\ref{eqn-3.a}). We use
Equation~(\ref{eqn-2.c}) and Equation~(\ref{eqn-2.d}) to compute:
\begin{eqnarray*}
&&\tau_1(\sigma_\pm(\phi))(x)=(m-2)(J_\pm^*\phi)(x)\\
&=&(\tau_1\nabla^g\Omega_\pm)(x)
  =(\varepsilon^{ij}\nabla^g\Omega_\pm)(x,e_i;e_j)
  =(\delta\Omega_\pm)(x)\,.
\end{eqnarray*}
This shows that $(m-2)J_\pm^\star\phi=\delta\Omega_\pm$.
Since $J_\pm^\star
J_\pm^\star=\pm\id$, Assertion (2) follows.\end{proof}

Let $m=4$.  By Theorem~\ref{thm-2.5},
$\nabla^g\Omega_\pm=\sigma_\pm(\phi)$ for some $\phi$. By
Theorem~\ref{thm-3.1}, 
$(M,g,J_\pm,\nabla)$ is a K\"ahler--Weyl structure where
$\phi=\pm\frac12J_\pm^*\delta\Omega_\pm$. This proves
Theorem~\ref{thm-1.2}.\hfill\qed

\section{The proof of Theorem \ref{thm-1.3} and of Theorem
\ref{thm-1.4}}\label{sect-4}

We begin with a simple example. Let $(x^1,x^2,x^3,x^4)$ be the usual
coordinates on $\mathbb{R}^4$. Define the canonical (para)-complex
structure $J_\pm$ on $\mathbb{R}^4$ by setting:
\begin{equation}\label{eqn-4.a}
J_\pm(\partial_{x_1})=\partial_{x_2},\quad
J_\pm(\partial_{x_2})=\pm\partial_{x_1},\quad
J_\pm(\partial_{x_3})=\partial_{x_4},\quad
J_\pm(\partial_{x_4})=\pm\partial_{x_3}\,.
\end{equation}
If $g$ is a para/pseudo-Hermitian metric on $\mathbb{R}^4$, set
$$
   g(\partial_{x_i},\partial_{x_j};\partial_{x_k})
  =\partial_{x_k}g(\partial_{x_i},\partial_{x_j})\,.
$$
We then have \cite{BGGH11}:
\begin{equation}\begin{array}{l}\label{eqn-4.b}
(\nabla^g\Omega_\pm)(\partial_{x_i},\partial_{x_j};\partial_{x_k})
     =\textstyle\frac12\{
g(\partial_{x_i},\partial_{x_k};J_\pm \partial_{x_j})
     -g(\partial_{x_j},\partial_{x_k};J_\pm \partial_{x_i})\\
\qquad\qquad\qquad\qquad\qquad\quad
     +g(J_\pm\partial_{x_i},\partial_{x_k};\partial_{x_j})
     -g(J_\pm \partial_{x_j},\partial_{x_k};\partial_{x_i})\}\,.
\vphantom{\vrule height 11pt}\end{array}\end{equation}
We consider a flat background metric
\begin{equation}\label{eqn-4.c}
g_0:=\varepsilon_{11}(dx^1\otimes dx^1\mp dx^2\otimes dx^2)
+\varepsilon_{22}(dx^3\otimes dx^3\mp dx^4\otimes dx^4)\,.
\end{equation}
We take $\varepsilon_{11}=\varepsilon_{22}=1$ to define a Hermitian metric,
$\varepsilon_{11}=1$ and $\varepsilon_{22}=-1$ to define a pseudo-Hermitian
metric of signature $(2,2)$, and $\varepsilon_{11}=\varepsilon_{22}=-1$ to
define a pseudo-Hermitian metric of signature $(4,0)$. We take
$\varepsilon_{11}=\varepsilon_{22}=1$ (and change the sign on
$\partial_{x_2}$ and $\partial_{x_4}$) to define a para-Hermitian metric.

\begin{lemma}\label{lem-4.1}
Let $f=f(x_1,x_3)$ be a smooth function on $\mathbb{R}^4$. Perturb the
metric of {\rm Equation~(\ref{eqn-4.c})} to define:
$$
g_f:=\varepsilon_{11}e^{2f}(dx^1\otimes dx^1\mp dx^2\otimes dx^2)
+\varepsilon_{22}(dx^3\otimes dx^3\mp dx^4\otimes dx^4)\,.
$$
This is a para/pseudo-Hermitian metric on $\mathbb{R}^4$. Apply
{\rm Theorem~\ref{thm-3.1}} to choose $\nabla$ so $(M,g_f,J_\pm,\nabla)$ is a
K\"ahler--Weyl structure. Then
$\rho_a=\pm4\partial_{x_1}\partial_{x_3}fdx^1\wedge dx^3$.
 \end{lemma}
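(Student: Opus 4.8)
The plan is to avoid computing the Weyl connection $\nabla$ and its curvature directly, and instead to determine the Weyl one-form $\phi$ from Theorem~\ref{thm-3.1} and then read off $\rho_a$ from the identity $d\phi=-\frac14\rho_a$ of Equation~(\ref{eqn-1.c}), valid since $m=4$. This reduces the whole problem to evaluating $\delta\Omega_\pm$, applying $J_\pm^*$, and taking one exterior derivative. First I would record that $g_f$ is para/pseudo-Hermitian: the factor $e^{2f}$ rescales the entire $(x^1,x^2)$-block of $g_0$ by a single function and $J_\pm$ preserves this block, so the relation $J_\pm^*g_f=\mp g_f$ is inherited verbatim from the flat model. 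Evaluating $\Omega_\pm(x,y)=g_f(x,J_\pm y)$ on the coordinate frame then gives $\Omega_\pm=\pm(\varepsilon_{11}e^{2f}\,dx^1\wedge dx^2+\varepsilon_{22}\,dx^3\wedge dx^4)$.

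The core of the argument is the computation of $\delta\Omega_\pm=\tau_1\nabla^g\Omega_\pm$ via Equation~(\ref{eqn-2.c}). I would evaluate $\nabla^g\Omega_\pm$ from the explicit formula~(\ref{eqn-4.b}) and contract using $(\tau_1H)(\partial_{x_l})=\sum_i\varepsilon^{ii}H(\partial_{x_l},\partial_{x_i};\partial_{x_i})$, which is legitimate because $g_f$ is diagonal in the coordinate frame. The decisive simplification is that $f=f(x_1,x_3)$: the only nonzero metric derivatives $\partial_{x_k}g_f(\partial_{x_i},\partial_{x_j})$ are the $x^1$- and $x^3$-derivatives of $g_{11}$ and $g_{22}$. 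Tracking which of the four terms of~(\ref{eqn-4.b}) survive, the terms involving $\partial_{x_1}f$ cancel in antisymmetric pairs while those involving $\partial_{x_3}f$ reinforce, so that $\delta\Omega_\pm$ is supported on $dx^4$ and is proportional to $\partial_{x_3}f$.

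The conclusion is then immediate. Since $J_\pm^*dx^4=dx^3$, Theorem~\ref{thm-3.1} gives $\phi=\pm\frac12J_\pm^*\delta\Omega_\pm$, a multiple of $\partial_{x_3}f\,dx^3$; hence $d\phi$ is a multiple of $\partial_{x_1}\partial_{x_3}f\,dx^1\wedge dx^3$, and Equation~(\ref{eqn-1.c}) yields $\rho_a=-4\,d\phi=\pm4\,\partial_{x_1}\partial_{x_3}f\,dx^1\wedge dx^3$. The main obstacle is not conceptual but the sign bookkeeping: one must thread the paired symbols $\pm,\mp$ consistently through the definition of $J_\pm$, the signs in $g_f$, the entries $\varepsilon^{ii}$ of the inverse metric, the two applications of $J_\pm^*$ (where $J_\pm^*J_\pm^*=\pm\id$), and Equation~(\ref{eqn-1.c}), and one must verify cleanly that the $\partial_{x_1}f$-terms cancel so that only the mixed partial $\partial_{x_1}\partial_{x_3}f$ survives. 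As a consistency check I would note the degenerate cases: if $f=f(x_1)$ then $\phi=0$, and if $f=f(x_3)$ then $d\phi=0$, so in either case the Weyl structure is trivial, exactly as required when $\partial_{x_1}\partial_{x_3}f\equiv0$.
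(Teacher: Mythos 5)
Your proposal follows essentially the same route as the paper's own proof: evaluate $\nabla^{g_f}\Omega_\pm$ from Equation~(\ref{eqn-4.b}) (where, as you note, only the $\partial_{x_3}f$-terms survive), contract with $\tau_1$ via Equation~(\ref{eqn-2.c}) to get $\delta\Omega_\pm$ supported on $dx^4$, apply Theorem~\ref{thm-3.1} to obtain $\phi$ as a multiple of $\partial_{x_3}f\,dx^3$, and conclude from $d\phi=-\frac14\rho_a$ in Equation~(\ref{eqn-1.c}). The only differences are cosmetic: the paper uses the symmetries of Equation~(\ref{eqn-2.b}) to list all nonzero components of $\nabla^{g_f}\Omega_\pm$ before contracting, and it carries out explicitly the sign and coefficient bookkeeping (arriving at $\delta\Omega_\pm=\mp2\partial_{x_3}f\,dx^4$ and $\phi=\mp\partial_{x_3}f\,dx^3$) that you correctly identify as the remaining work but leave unexecuted.
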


\begin{proof}
We apply Equation~(\ref{eqn-4.b}) to see
$$
(\nabla^{g_f}\Omega_\pm)(\partial_{x_1},\partial_{x_3};\partial_{x_k})=
\left\{\begin{array}{rll}
\mp\varepsilon_{11}e^{2f}\partial_{x_3}f&\text{if}&k=2\\
0&\text{if}&k\ne2
\end{array}\right\}\,.$$
We apply Equation~(\ref{eqn-2.b}) to see that the non-zero components of
$\nabla^{g_f}\Omega_\pm$ are given, up to the
$\mathbb{Z}_2$ symmetry in the first components, by:
$$\begin{array}{ll}
(\nabla^{g_f}\Omega_\pm)(\partial_{x_1},\partial_{x_3};\partial_{x_2})
     =\mp\varepsilon_{11}e^{2f}\partial_{x_3}f,&
(\nabla^{g_f}\Omega_\pm)(\partial_{x_1},\partial_{x_4};\partial_{x_1})
     =\pm\varepsilon_{11}e^{2f}\partial_{x_3}f,\\
(\nabla^{g_f}\Omega_\pm)(\partial_{x_2},\partial_{x_4};\partial_{x_2})
     =-\varepsilon_{11}e^{2f}\partial_{x_3}f,&
(\nabla^{g_f}\Omega_\pm)(\partial_{x_2},\partial_{x_3};\partial_{x_1})
   =\pm\varepsilon_{11}e^{2f}\partial_{x_3}f.\vphantom{\vrule height
12pt}
\end{array}$$
We contract indices and apply Theorem~\ref{thm-3.1} to see:
$$
\phi=\textstyle\pm\frac12J_\pm^*\delta\Omega_\pm
=\pm\frac12J_\pm^*\tau_1(\nabla^{g_f}\Omega_\pm)
=\pm J_\pm^*\{\mp\partial_{x_3}f\cdot
dx^4\}=\mp\partial_{x_3}f\cdot dx^3\,.
$$
Since $f=f(x_1,x_3)$, the desired conclusion now follows from
Equation~(\ref{eqn-1.c}).
\end{proof}
\subsection{The proof of Theorem~\ref{thm-1.3}}
Let $m=4$. We apply Lemma~\ref{lem-2.1}, Theorem~\ref{thm-2.2}, and
Theorem~\ref{thm-2.4}. Let
$\xi$ be an irreducible
$\mathcal{U}_\pm^\star$ submodule of $\mathfrak{K}_{\pm,\mathfrak{W}}$. If
$\xi$ is not isomorphic to a submodule of
$\Lambda^2$, then $\xi$ must be a submodule of $\mathfrak{R}$ and hence
$$
  \xi\subset\mathfrak{R}\cap\mathfrak{K}_\pm
   =W_{\pm,1}\oplus W_{\pm,2}\oplus W_{\pm,3}\,.
$$
Since the modules $W_{\pm,i}$ are inequivalent and irreducible
for $i=1,2,3$, we have $\xi=W_{\pm,i}$ for $i=1,2,3$.

We therefore suppose that $\xi$ is isomorphic to a submodule of
$\Lambda^2$. If $\psi\in\Lambda^2$, set:
\begin{eqnarray*}
&&\Xi(\psi)(x,y,z,w):=2\psi(x,y)\langle z,w\rangle+\psi(x,z)\langle
y,w\rangle-\psi(y,z)\langle x,w\rangle\\
&&\phantom{\sigma(\psi)(x,y,z,w):}-\psi(x,w)\langle
y,z\rangle+\psi(y,w)\langle x,z\rangle\,.
\end{eqnarray*}
\medbreak\noindent We then have \cite{H93,H94,TV81} that the module $L$ of
Theorem~\ref{thm-2.2} is the image of $\Xi$.
Suppose that $\xi\approx\chi$. Then $\xi$ appears with
multiplicity $1$ and thus
$$\xi=W_{\pm,11}=\Xi(\Omega_\pm)\cdot\mathbb{R}\,.$$
Let $J_\pm$ be the (para)-complex structure on $\mathbb{R}^4$ given in
Equation~(\ref{eqn-4.a}) and let $g$ be the metric of
Equation~(\ref{eqn-4.c}). We show
$\Xi(\Omega_\pm)$ is not a K\"ahler tensor and thus $\xi\not\approx\chi$ by
computing:
\begin{eqnarray*}
&&\phantom{\mp}\Xi(\Omega_\pm)(e_1,e_4,e_3,e_1)=-g(e_4,J_\pm
e_3)g(e_1,e_1)=-g_{11}g_{44},\\
&&\mp\Xi(\Omega_\pm)(e_1,e_4,J_\pm e_3,J_\pm e_1)=\pm
g(e_1,J_\pm J_\pm e_1)g(e_4,J_\pm e_3)=g_{11}g_{44}\,.
\end{eqnarray*}

Let $f=\pm\frac14x_1x_3$. By Lemma~\ref{lem-4.1},
$\rho_a(f)=dx^1\wedge dx^3$; this is perpendicular to $\Omega_\pm$. Clearly
$\rho_a(f)$ has non-trivial components in both $\Lambda_{0,\mp,J_\pm}^2$ and
$\Lambda_{\pm,J_\pm}^2$. By Lemma~\ref{lem-2.1}, this means that both of the
modules $\Lambda_{\pm,J_\pm}^2$ and
$\Lambda_{0,\mp,J_\pm}^2$ appear with multiplicity at least 1 in
$\mathfrak{K}_{\pm,\mathfrak{W}}$. By Theorem~\ref{thm-2.4},
$\Lambda_{0,\mp,J_\pm}^2$ appears with multiplicity 1 in
$\mathfrak{W}$. Thus $\Lambda_{0,\mp,J_\pm}^2$ appears with multiplicity 1 in
$\mathfrak{K}_{\pm,\mathfrak{W}}$. Since $\Lambda_{\pm,J_\pm}^2$ appears with
multiplicity $2$ in $\mathfrak{W}$ and since
$W_{\pm,9}\approx\Lambda_{\pm,J_\pm}^2\subset\mathfrak{R}$ does not appear in
$\mathfrak{K}_{\mathfrak{R}}$, we conclude that $\Lambda_{\pm,J_\pm}^2$ appears with
multiplicity $1$ in $\mathfrak{K}_{\pm,\mathfrak{W}}$. Theorem~\ref{thm-1.3}
now follows.\hfill\qedbox
\subsection{The proof of Theorem~\ref{thm-1.4}}
Let $m=4$. Consider the space $\mathcal{S}$ of all germs of
para/pseudo-Hermitian metrics $g$ on $\mathbb{R}$ with the canonical
(para)-complex structure given in Equation~(\ref{eqn-4.a}) so that
$g(0)=g_0$ is the inner product of Equation~(\ref{eqn-4.c}).
$$g(0)=\varepsilon_{11}(dx^1\otimes dx^1\mp dx^2\otimes
dx^2)+\varepsilon_{22}(dx^3\otimes dx^3\mp dx^4\otimes dx^4)$$
and so that $dg(0)=0$. We let $\nabla$ be the associated K\"ahler--Weyl
connection and let $R=R(0)$. Let $\tilde{\mathfrak{K}}_{\pm,\mathfrak{W}}$
be the range of this map; this is $\mathcal{U}_\pm^\star$ module.
Results of \cite{BGM10} in the K\"ahler setting show every element of
$\mathfrak{K}_{\pm,\mathfrak{R}}$ can be geometrically realized by such a
K\"ahler metric; set $\nabla=\nabla^g$ to take the trivial Weyl structure.
Thus
$W_{\pm,i}\subset\tilde{\mathfrak{K}}_{\pm,\mathfrak{W}}$ for $i=1,2,3$.
Lemma~\ref{lem-4.1} shows $\tilde{\mathfrak{K}}_{\pm,\mathfrak{W}}$ contains
submodules isomorphic to $\Lambda_{\pm,J_\pm}^2$ and to $\Lambda_{0,\mp,J_\pm}^2$.
We may now apply Theorem~\ref{thm-1.3} to conclude
$\tilde{\mathfrak{K}}_{\pm,\mathfrak{W}}={\mathfrak{K}}_{\pm,\mathfrak{W}}$
and to complete the proof.\hfill\qedbox

\section{The proof of Theorem~\ref{thm-1.5}}\label{sect-5}
Let $\mathcal{G}_\pm$ be the Gray symmetrizer defined in
Equation~(\ref{eqn-1.f}). Then $\frac18\mathcal{G}_\pm$ is orthogonal
projection on the $\mathcal{U}_\pm^\star$ module $W_{\pm,7}$ appearing in
Theorem~\ref{thm-2.4}
\cite{BGN11,TV81}. Let
$(M,g,J_\pm)$ be a para/pseudo-Hermitian manifold and let $\nabla$ be a
torsion free connection with
$\nabla g=-2\phi\otimes g$. Choose $f\in
C^\infty(M)$ so that
$df(P)=\phi(P)$. If we replace $g$ by the conformally equivalent metric
$\tilde g=e^{2f}g$, then we replace $\phi$ by $\tilde\phi=\phi-df$. Thus
without loss of generality, we may assume that $\phi(P)=0$. The map
$\phi\rightarrow R^\nabla(P)-R^g(P)$ is then a linear map in the second
derivatives of $\phi$ and can be regarded as defining a map
$\Theta:\otimes^2T_P^*M\rightarrow\mathfrak{W}_P$. Since $W_{\pm,7}$
is not isomorphic to any $\mathcal{U}_\pm^\star$ submodule of
$\otimes^2T_P^*M$, we may apply Lemma~\ref{lem-2.1} to see that
$\mathcal{G}\circ\Theta=0$ and thus
$\mathcal{G}_\pm(R^\nabla)=\mathcal{G}_\pm(R^g)$. Since $J_\pm$ is
integrable, $\mathcal{G}_\pm(R^g)=0$ \cite{BGN11,gray}.
\hfill\qed

\section*{Acknowledgments} 
Research of P. Gilkey and S. Nik\v cevi\'c partially supported by Project
MTM2009-07756 (Spain), by DFG
PI 158/4-6 (Germany), and by Project 174012 (Serbia).


\begin{thebibliography}{aaa}

\bibitem{BGGH11} M. Brozos-V\'{a}zquez, E. Garc\'{\i}a-R\'{\i}o, 
P. Gilkey, and L. Hervella,
``Geometric realizability of covariant derivative K\"ahler tensors 
for almost pseudo-Hermitian and almost para-Hermitian
manifolds", to appear {\it Ann. Mat. Pura Appl.},
http://arxiv.org/abs/1012.4964.

\bibitem{BGKN09} M. Brozos-V\'azquez, P. Gilkey, H. Kang, and 
S. Nik\v cevi\'c, ``Geometric Realizations of Hermitian curvature models",
{\it J. Math Soc. Japan} {\bf 62} (2010), 851--866.

\bibitem{BGM10} M. Brozos-V\'azquez, P. Gilkey, and E. Merino, 
``Geometric realizations of Kaehler and of para-Kaehler curvature models",
{\it Int. J. Geom. Methods Mod. Phys \bf 7} (2010), 505--515.

\bibitem{BGNV09}
M. Brozos-V\'azquez, P. Gilkey, S. Nik\v cevi\'c, and 
R. V\'{a}zquez-Lorenzo, ``Geometric Realizations of para-Hermitian curvature
models", {\it Results in Math.} {\bf 56} (2009), 319--333.

\bibitem{BGN11} M. Brozos-V\'azquez, P. Gilkey, and S. Nik\v cevi\'c, 
``Geometric Realizations of Curvature",
{\it Imperial College Press} (to appear).

\bibitem{CP00} D. Calderbank and H. Pedersen,
``Self dual spaces with complex structures, Einstein-Weyl geometry and geodesics"
{\it Annales de l'institut Fourier} {\bf 50} (2000), 921--963.

\bibitem{GI94} G. Ganchev and S. Ivanov, 
``Semi-symmetric $W$-metric connections and the $W$-conformal group",
{\it God. Sofij. Univ. Fak. Mat. Inform.} {\bf 81} (1994), 181--193.

\bibitem{G09} A. Ghosh, 
``Einstein-Weyl structures on contact manifolds'', 
{\it Ann. Global Anal. Geom.} {\bf 35} (2009), 431-441.

\bibitem{GS11} P. Gilkey and S. Nik\v cevi\'c, 
``K\"ahler and para-K\"ahler curvature Weyl manifolds",
\par http://arxiv.org/abs/1011.4844.

\bibitem{GNU10} P. Gilkey, S. Nik\v cevi\'c, and U. Simon, 
``Geometric realizations, curvature decompositions, and Weyl
manifolds", 
{\it J. Geom. Phys.} {\bf 61} (2011), 270--275.

\bibitem{gray} Gray A., 
``Curvature identities for Hermitian and almost Hermitian manifolds",
{\it T{\^o}hoku Math. J.} {\bf 28} (1976), 601--612.

\bibitem{GH80} A. Gray and L. Hervella,
``The sixteen classes of almost Hermitian manifolds and their linear invariants",
{\it Ann. Mat. Pura Appl.} {\bf 123} (1980), 35--58.

\bibitem{H93} T. Higa,
 ``Weyl manifolds and Einstein-Weyl manifolds", 
{\it Comm. Math. Univ. St. Pauli} {\bf 42} (1993), 143--160.

\bibitem{H94} T. Higa, 
``Curvature tensors and curvature conditions in Weyl geometry", 
{\it Comm. Math. Univ. St. Pauli} {\bf 43} (1994), 139--153.

\bibitem{KK10} G. Kokarev and D. Kotschick,
``Fibrations and fundamental groups of K\"ahler-Weyl manifolds",
{\it Proc. Am. Math. Soc.} {\bf 138} (2010), 997--1010.

\bibitem{M73} H. Mori,
``On the decomposition of generalized K-curvature tensor fields",
{\it Tohoku Math. J., II. Ser. \bf 25} (1973), 225--235.

\bibitem{N09} F. Nakata, 
``A construction of Einstein-Weyl spaces via Lebrun-Mason type twistor 
correspondence",
{\it Commun. Math. Phys.} {\bf 289} (2009), 663-699.


\bibitem{P10} J. Park, ``Projectively flat Yang-Mills connections", 
{\it Kyushu J. Math} {\bf 64} (2010), 49--58.

\bibitem{PPS93} H. Pedersen, Y. Poon, and A. Swann, 
``The Einstein-Weyl equations in complex and quaternionic geometry",
{\it Diff. Geo. and Appl.} {\bf 3} (1993), 309--321.

\bibitem{PS91} H. Pedersen and A. Swann, 
     ``Riemannian submersions, four manifolds, and Einstein-Weyl
     geometry", {\it Proc. London Math. Soc.} (1991) {\bf 66}, 381--399.

\bibitem{PT93} H. Pedersen, and K. Tod, 
``Three-dimensional Einstein-Weyl geometry", {\it Adv. Math.} 
{\bf 97} (1993), 74--109.

\bibitem{S73} M. Sitaramayya,
``Curvature tensors in Kaehler manifolds",
{\it Trans. Am. Math. Soc. \bf 183} (1973), 341--353.

\bibitem{TV81} F. Tricerri  and L. Vanhecke,
``Curvature tensors on almost Hermitian manifolds", 
{\it  Trans. Amer. Math. Soc.}
{\bf  267}  (1981), 365--397.

\bibitem{V82} I. Vaisman, 
``Generalized Hopf manifolds", 
{\it Geom. Dedicata} {\bf 13} (1982), 231--255.

\bibitem{V83} I. Vaisman, ``A survey of generalized Hopf manifolds", 
{\it Differential Geometry on Homogeneous Spaces}, 
Proc. Conf. Torino Italy (1983), Rend. Semin. Mat. Torino, Fasc. Spec. 205--221.

\bibitem{V10} G. Vassal, ``Asymptotically flat conformal structures", 
{\it J. Commun. Math. Physics.} {\bf 295} (2010), 503--529.


\bibitem{W22} H. Weyl, ``Space-Time-Matter", Dover Publ. 1922.


\end{thebibliography}
\end{document}